\title{Groups of arbitrary lawlessness growth}
\author{Henry Bradford and Jacob Willis}
\date{}
\newtheorem{thm}{Theorem}[section]
\newtheorem{lem}[thm]{Lemma}
\newtheorem{propn}[thm]{Proposition}
\newtheorem{coroll}[thm]{Corollary}
\newtheorem{defn}[thm]{Definition}
\newtheorem{ex}[thm]{Example}
\newtheorem{notn}[thm]{Notation}
\newtheorem{rmrk}[thm]{Remark}
\newtheorem{qu}[thm]{Question}
\DeclareMathOperator{\im}{im}
\DeclareMathOperator{\SL}{SL}
\DeclareMathOperator{\supp}{supp}
\DeclareMathOperator{\tetr}{tetr}
\DeclareMathOperator{\Wr}{Wr}
\DeclareMathOperator{\PSL}{PSL}
\begin{document}

\maketitle

\begin{abstract}
For a finitely generated lawless group $\Gamma$ and $n \in \mathbb{N}$, 
let $\mathcal{A}_{\Gamma} (n)$ be the minimal positive integer $M_n$ 
such that for all nontrivial reduced words $w$ of length at most $n$ in the free group 
of fixed rank $k \geq 2$, 
there exists $\overline{g} \in \Gamma^k$ of word-length at most $M_n$ 
with $w(\overline{g}) \neq e$. 
For any unbounded nondecreasing function $f : \mathbb{N} \rightarrow \mathbb{N}$ 
satisfying some mild assumptions, 
we construct $\Gamma$ such that the function $\mathcal{A}_{\Gamma}$ is equivalent to $f$. 
Our result generalizes both a Theorem of the first named author, 
who constructed groups for which $\mathcal{A}_{\Gamma}$ is unbounded but grows 
more slowly than any prescribed function $f$, 
and a result of Petschick, 
who constructed lawless 
    groups for which $\mathcal{A}_{\Gamma}$ grows faster than any tower of exponential functions. 
\end{abstract}

\section{Introduction}

For $\Gamma$ a group and $F$ a free group, a nontrivial element $1\neq w\in F$
is a law for $\Gamma$ if it lies in the kernel of every homomorphism
$F \rightarrow \Gamma$.
The group $\Gamma$ is \emph{lawless} if it has no laws.
In \cite{Brad} the first named author introduced a quantitative
version of lawlessness, applicable to finitely generated groups,
and encoded by the \emph{lawlessness growth} function
$\mathcal{A}_{\Gamma}:
\mathbb{N}\rightarrow\mathbb{N}\cup\lbrace \infty \rbrace$.
The group $\Gamma$ is lawless iff $\mathcal{A}_{\Gamma}$ is always finite,
and intuitively, the more slowly $\mathcal{A}_{\Gamma}$ grows,
the stronger the lawlesness of $\Gamma$.


One may seek to estimate $\mathcal{A}_{\Gamma}$ for a given
group $\Gamma$.
For instance, $\mathcal{A}_{\Gamma}$ is bounded iff
$\Gamma$ has a nonabelian free subgroup (\cite{Brad} Theorem 1.1).
One may also ask what growth-types are possible.
It was proved in \cite{Brad} that for any
unbounded nondecreasing function $f$,
there is a finitely generated lawless group $\Gamma$
for which $\mathcal{A}_{\Gamma}$ is unbounded,
but grows more slowly than $f$.
By refining the construction from \cite{Brad},
we prove that almost every nondecreasing function is \emph{equivalent}
to the lawlessness growth of some group.

\begin{thm}[Theorem \ref{fast-cor}] \label{intro-fast-f}
Let $f : \mathbb{N} \rightarrow \mathbb{N}$ be an unbounded non-decreasing function. Suppose there exists some integer $M \geq 2$ where $f(Mn) \geq Mf(n)$ for all $n \in \mathbb{N}$. Then there exists an elementary amenable lawless group $\Gamma$, generated by a finite set $S$ such that for all $n\in\mathbb{N}$, $\mathcal{A}_{\Gamma} ^S (n) \approx f (n)$.
\end{thm}

\begin{thm}[Theorem \ref{slow-ex}] \label{intro-f-slow}
Let $f : \mathbb{N} \rightarrow \mathbb{N}$ be a surjective unbounded non-decreasing function. Suppose there exists some integer $M \geq 2$ where $f(Mn) \leq Mf(n)$ for all $n \in \mathbb{N}$. Then there exists an elementary amenable lawless group $\Gamma$, generated by a finite set $S$ such that for all $n\in\mathbb{N}$, $\mathcal{A}_{\Gamma} ^S (n) \approx f (n)$.
\end{thm} 

Here, ``$\approx$'' is a standard notion of equivalence of growth-functions; 
see Notation \ref{EquivNotn} below. 
In particular there exist finitely generated lawless groups of
arbitrarily fast lawlessness growth.

\begin{thm}[Theorem \ref{faster-growth}] \label{intro-faster-growth}
Let $f : \mathbb{N} \rightarrow \mathbb{N}$ be an unbounded nondecreasing function. There exists an elementary amenable lawless group $\Gamma$, generated by a finite set $S$ such that for all $n\in\mathbb{N}$, 
$\mathcal{A}_{\Gamma} ^S (n) \succeq f (n)$.
\end{thm}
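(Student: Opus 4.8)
The plan is to obtain this as a corollary of Theorem \ref{intro-fast-f} by a purely function-theoretic reduction, rather than by constructing a new group from scratch. The idea is that we are only asked for a lower bound $\mathcal{A}_{\Gamma}^S \succeq f$, so we have complete freedom to overshoot: it suffices to produce an auxiliary unbounded nondecreasing function $h$ that dominates $f$ and happens to satisfy the hypothesis of Theorem \ref{intro-fast-f}, and then feed $h$ to that theorem. Since $\mathcal{A}_{\Gamma}^S \approx h$ forces $\mathcal{A}_{\Gamma}^S \succeq h$, and $\succeq$ is transitive, any group realizing $h$ up to $\approx$ will automatically dominate $f$. Note that Theorem \ref{intro-fast-f} is the right tool here and Theorem \ref{intro-f-slow} is not needed, because the constraint $h(Mn) \geq 9 h(n)$ is a \emph{lower} bound on growth rate, and an arbitrary, possibly very fast, $f$ poses no obstruction to being dominated by such an $h$.

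First I would construct $h$. Fix $M = 2$ and let $h$ be the nondecreasing dyadic step function that is constant on each interval $[2^k, 2^{k+1})$ with value $c_k$, where $c_0 = \max\{f(2), 1\}$ and $c_k = \max\{f(2^{k+1}), 9 c_{k-1}\}$ for $k \geq 1$. Two properties then need checking. For domination: since $f$ is nondecreasing, for $n \in [2^k, 2^{k+1})$ we have $h(n) = c_k \geq f(2^{k+1}) \geq f(n)$, so $h \geq f$ pointwise and hence $h \succeq f$. For the growth hypothesis: if $n \in [2^k, 2^{k+1})$ then $2n \in [2^{k+1}, 2^{k+2})$, so $h(2n) = c_{k+1} \geq 9 c_k = 9 h(n)$, which is exactly $h(Mn) \geq 9 h(n)$ with $M = 2$. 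Finally $c_k \geq 9^k c_0 \to \infty$, so $h$ is indeed unbounded and nondecreasing, as required to invoke the theorem.

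Applying Theorem \ref{intro-fast-f} to $h$ yields an elementary amenable lawless group $\Gamma$, finitely generated by $S$, with $\mathcal{A}_{\Gamma}^S \approx h$. Combining this with $h \succeq f$ and transitivity of $\succeq$ gives $\mathcal{A}_{\Gamma}^S \succeq h \succeq f$, which is the desired conclusion.

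The one point I would watch most carefully is that a dominating function can always be forced to satisfy $h(Mn) \geq 9 h(n)$: this condition genuinely fails for slowly growing inputs (e.g. $f(n) = \log n$), so it cannot simply be inherited from $f$, and the role of the recursive $\max$ in the definition of $c_k$ is precisely to inflate $h$ enough to enforce it while only ever raising values, so that domination is never sacrificed. The only remaining items are bookkeeping: handling the boundary index $n = 1$ (and $n = 0$ should it lie in the index set), and confirming that the dyadic coarsening of $h$ respects the equivalence relation $\approx$ appearing in Theorem \ref{intro-fast-f}; both are routine. Beyond this reduction I would expect essentially no difficulty, since all of the substantive group construction has already been carried out in Theorem \ref{intro-fast-f}.
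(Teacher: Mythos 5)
Your argument is correct, but it takes a genuinely different route from the paper. The paper proves Theorem \ref{faster-growth} directly, and \emph{before} Theorem \ref{fast-f}: it fixes $L$ satisfying the hypothesis of Theorem \ref{law-without-square}, chooses $p,q$ via Lemma \ref{pq-other-def} so that $(q(m)-p(m))/2 \geq f(L(m+1))$, and reads off the lower bound from the complexity estimate $\chi_{\Gamma(L)}^{S(L)}(v_n) \geq (q(n)-p(n))/2$ of Theorem \ref{cmplx-lwbnd}; no upper bound on $\mathcal{A}_{\Gamma}^{S}$ is ever needed. You instead majorize $f$ by a dyadic step function $h$ with $h(2n) \geq 9h(n)$ and feed $h$ to Theorem \ref{fast-f}. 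This is sound: your $h$ is a nondecreasing unbounded $\mathbb{N}$-valued function satisfying the hypothesis with $M=2$, the pointwise bound $h \geq f$ gives $f \preceq h$ with $K=1$, and $\preceq$ is transitive. It is also not circular, since the paper's proof of Theorem \ref{fast-f} (via Theorem \ref{equiv+constants}) reuses only the \emph{argument} of Theorem \ref{faster-growth}, never its statement. What your reduction buys is brevity and in fact slightly more information (a group whose lawlessness growth is equivalent to the explicit majorant $h$, not merely bounded below by $f$); what it costs is that it rests on the full two-sided control of Theorem \ref{fast-f}, whereas the paper's direct proof needs only the one-sided lower-bound machinery and, as Remark \ref{square-difference} notes, would even go through with the weaker law-length bound of Theorem \ref{law-with-square}. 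Your closing worry about the dyadic coarsening ``respecting $\approx$'' is moot: you never need $h \approx f$, only $h \succeq f$, which pointwise domination already supplies.
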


The groups $\Gamma$ constructed in the proofs of Theorems \ref{intro-fast-f}; \ref{intro-f-slow} 
and \ref{intro-faster-growth} are drawn from a family of groups we call 
``sparse wreath products''. These arise as subgroups 
of the unrestricted wreath product $\Delta \Wr \mathbb{Z}$, 
generated by $\mathbb{Z}$ and a finite set $S \subset \Delta ^{\mathbb{Z}}$, 
with the property that translates of the supports of elements of $S$ intersect 
at at most one point in $\mathbb{Z}$. Roughly speaking, 
this ``sparseness'' hypothesis on $S$ implies that the behaviour of word-maps on 
$\Gamma$ reflects that of word-maps on $\Delta$, at scales which we can control 
by carefully choosing the outputs of the functions in $S$. 
The sparse wreath product is a construction which seems flexible enough to be relevant 
to constructing groups with many other kinds of exotic behaviours, 
and we expect it to find applications elsewhere. 

Prior to our Theorem \ref{intro-faster-growth}, the best available construction
of groups of fast lawlessness growth was due to Petschick \cite{Petsc}.
Here, for $k$ a positive integer, $\tetr_k$ is the tetration function
given by $\tetr_k (0) = 1$ and $\tetr_k (n+1) = k^{\tetr_k (n)}$.

\begin{thm}[Petschick] \label{PetMainThm}
There exists a finitely generated lawless group $\Lambda$
such that $\mathcal{A}_{\Lambda} \succeq \tetr_2 \circ \log_8$.
\end{thm}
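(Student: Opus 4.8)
The plan is to realize $\Lambda$ as a finitely generated \emph{self-similar sparse wreath product}: a subgroup $\Lambda \leq \Delta \Wr \mathbb{Z}$ generated by the shift $t$ and a finite set $S \subset \Delta^{\mathbb{Z}}$ of lamp-functions whose supports are single points spaced along $\mathbb{Z}$ by a rapidly growing sequence, where the lamp group $\Delta$ is chosen recursively (ideally as a self-similar fixed point of the same construction, so that the family of scales closes up under wreathing). The guiding heuristic, which is exactly the one the present paper abstracts, is that word-maps on $\Lambda$ mirror those on $\Delta$ but at a scale dilated by the support-spacing; iterating the construction then exponentiates the relevant scale at each layer, and it is this per-layer exponentiation that will produce tower-type growth. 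Note that one must keep $\Lambda$ free of nonabelian free subgroups throughout, since otherwise $\mathcal{A}_{\Lambda}$ would merely be bounded by Theorem 1.1 of \cite{Brad}; amenability is convenient but, unlike in Theorems \ref{intro-fast-f}--\ref{intro-faster-growth}, is not required here.

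The lower bound comes from reading the \emph{left-normed commutator} $\gamma_c = [x_1, \ldots, x_c]$ as a near-law. It has length $|\gamma_c| \approx 3 \cdot 2^{c-1} \leq 8^c$, so $\log_8 |\gamma_c| \leq c$. The key structural input is a \emph{commutator-collapse lemma}: within a word-length budget $R$, the shifts $t^{\pm 1}$ can displace each single-point support by at most $R$, so the supports of lamp-generators originating from points spaced more than $2R$ apart remain disjoint, whence the corresponding base-elements commute. Consequently, on the ball $B(R)$ one cannot assemble $c$ mutually overlapping, non-commuting pieces, and $\gamma_c$ collapses to the identity — provided the internal structure of $\Delta$ also makes $\gamma_c$ a law up to the smaller scale, which is precisely what the recursion is set up to guarantee. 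Thus $\gamma_c$ is a law on $B(R)$ for every $R < R(c)$, where the spacing/recursion forces $R(c) \geq \tetr_2(c)$, because each wreath layer replaces a scale $s$ by roughly $2^{s}$; at the same time a witnessing tuple exists at scale $\approx R(c)$, so $\gamma_c$ is not a global law.

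Assembling these gives $\mathcal{A}_{\Lambda}(|\gamma_c|) \geq R(c) \geq \tetr_2(c) \geq \tetr_2(\log_8 |\gamma_c|)$, using that $\tetr_2$ is nondecreasing and $\log_8|\gamma_c| \leq c$; monotonicity of $\mathcal{A}_{\Lambda}$ interpolates this to all $n$, and the constants are absorbed into $\succeq$, yielding $\mathcal{A}_{\Lambda} \succeq \tetr_2 \circ \log_8$. Lawlessness is checked separately and is weaker than the above: for an arbitrary nontrivial reduced word $w$, residual nilpotence of the free group makes $w$ survive in a free-nilpotent quotient of some class $d$, and since the iterated-wreath structure furnishes free-nilpotent sections of every class, one obtains a (long) tuple $\overline{g}$ with $w(\overline{g}) \neq e$. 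Finiteness of $S$ and of the seed data make $\Lambda$ finitely generated.

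The main obstacle is the quantitative estimate $R(c) \geq \tetr_2(c)$, i.e. the \emph{per-layer exponential blow-up}: one must show that witnessing $\gamma_c \neq e$ genuinely forces descending one further self-similar layer, and that producing the minimal non-sparse configuration at that next layer costs word-length at least exponential in the previous layer's cost. This demands (i) the collapse lemma above, controlling precisely how far bounded-length words can spread single-point supports, and (ii) a matching length \emph{lower} bound in the word metric of a wreath product, where controlling cancellation between the $\mathbb{Z}$-part and the lamp-part is delicate. Calibrating the support-spacing so that the recursion closes up exactly to $\tetr_2$ — rather than to the single exponential that a naive branch-group scale would give — is where essentially all of the work lies.
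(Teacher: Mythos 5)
The paper does not prove this statement: it is quoted from Petschick \cite{Petsc}, and the surrounding discussion records the actual mechanism, namely residually finite periodic subgroups of the automorphism group of a locally finite rooted tree with very slow \emph{torsion growth}. There the witnessing near-laws are the power words $x^{2^k}$, of length $2^k$: such a word vanishes on any ball all of whose elements have order dividing $2^k$, so slow torsion growth (radius of that ball a tower in $k$) directly yields $\mathcal{A}_{\Lambda}(2^k) \succeq \tetr_2(k)$. Your proposal replaces this with left-normed commutators $\gamma_c$ inside a ``self-similar sparse wreath product'', which is a genuinely different route --- and one the present paper pointedly does \emph{not} identify with Petschick's, since its sparse wreath products are never residually finite (Theorem \ref{not-rf}), while residual finiteness is the salient feature of Petschick's examples.

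Beyond the divergence, there is a genuine gap at the central quantitative claim $R(c) \geq \tetr_2(c)$. In a single sparse wreath product $\Delta \Wr \mathbb{Z}$, the cost of making one commutator of lamp-generators nontrivial is \emph{linear} in the support spacing --- compare Theorem \ref{cmplx-lwbnd}, where the commutator depth is fixed at two and all of the growth is carried by the outer word $\overline{w}_n$, not by nesting --- so increasing the nesting depth $c$ by one does not multiply the required word length by an exponential, and no tower can emerge at a single wreath level. The per-layer exponentiation you invoke would require an honestly iterated, self-similar wreath structure, which you neither construct nor analyze: the two estimates you yourself flag as essential (the collapse lemma across layers, and the matching word-length \emph{lower} bound in the iterated wreath metric) are exactly the missing content, and the lawlessness of the undefined group is left as an assertion about ``free-nilpotent sections of every class''. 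As written the proposal is a plan whose key step is absent; Petschick's torsion-growth argument avoids all of this because for power words, ``being a law on a ball'' reduces to a statement about element orders rather than about nilpotency of the ball.
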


Theorem \ref{PetMainThm} was based on a study of
lawless $p$-groups of slow \emph{torsion-growth}.
Essentially, in an infinite finitely generated $p$-group,
the lawlessness growth is at least as fast
as the ``inverse'' of the torsion growth function.
A key feature of Petschick's construction is that the groups
constructed in the proof of Theorem \ref{PetMainThm}
are residually finite (being subgroups of the automorphism group
of a locally finite rooted tree).
As Petschick points out, the solution to the restricted Burnside problem
(or rather, an effective version thereof due to Groves and Vaughan-Lee)
yields an explicit \emph{lower} bound on the torsion growth
of a residually finite group.
Therefore, there is an obstruction to constructing lawless
groups of arbitrarily fast lawlessness growth via Petschick's method.
This motivates the following observation, and a question.

\begin{propn}
The groups $\Gamma$ constructed in the proofs of Theorems \ref{intro-fast-f}; \ref{intro-f-slow} 
and \ref{intro-faster-growth}
are not residually finite.
\end{propn}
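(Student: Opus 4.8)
The plan is to locate, inside $\Gamma$, a copy of a restricted wreath product $C \Wr \mathbb{Z}$ with $C$ nonabelian, and then invoke the classical theorem of Gruenberg that such a wreath product is not residually finite; since residual finiteness passes to subgroups, this forces $\Gamma$ itself to be non-residually-finite. Write $\Gamma = B \rtimes \langle t \rangle$, where $B = \Gamma \cap \Delta^{\mathbb{Z}}$ is the base and $t$ acts on $B$ as the shift $\sigma$. That we must use a nonabelian $\Delta$ is no accident: were $\Delta$ abelian, $\Gamma$ would be abelian-by-cyclic, hence residually finite by P.\ Hall's theorem on finitely generated metabelian groups, so the argument is bound to exploit nonabelianness of the fibre.

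First I would manufacture the nonabelian fibre from singleton-supported elements. The sparseness hypothesis is exactly what makes this possible: for any two generators $a, b \in S$ and any $j$, the supports of $a$ and $t^j b t^{-j}$ meet in at most one point, so the commutator $[a, t^j b t^{-j}] \in B$ is supported on at most a single coordinate, where its value is the commutator of the values of $a$ and $b$ at the overlap. Because $\Delta$ is lawless it is in particular non-metabelian, so its derived subgroup $\Delta'$ is nonabelian; hence there are commutators with $[[x_1,y_1],[x_2,y_2]] \ne e$. Using the freedom---already exploited in tuning $\mathcal{A}_{\Gamma}$---to prescribe the outputs of the generators, I would arrange two such commutator elements $u, v \in B$ to be supported on a common single coordinate $p$, so that $\supp(u) = \supp(v) = \lbrace p \rbrace$ and $[u(p), v(p)] \ne e$.

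With $u, v$ in hand, the wreath product assembles itself. For distinct $n$ the conjugates $t^n u t^{-n}$ and $t^n v t^{-n}$ are supported on the distinct single coordinates $p - n$, so the subgroup of $B$ they generate is the restricted direct sum $\bigoplus_{n \in \mathbb{Z}} C$, where $C = \langle u(p), v(p) \rangle \leq \Delta$ is nonabelian; the generator $t$ permutes these factors regularly. Thus $\langle t, u, v \rangle \cong C \Wr \mathbb{Z}$ sits inside $\Gamma$, and Gruenberg's theorem gives the conclusion. Since this argument appeals only to the common sparse-wreath-product structure and to the non-metabelianness of $\Delta$, it applies uniformly to the groups produced for all three of Theorems \ref{intro-fast-f}, \ref{intro-f-slow} and \ref{intro-faster-growth}, without any reference to the growth rate of $\mathcal{A}_{\Gamma}$.

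The main obstacle is the middle step: producing two elements supported on a single common coordinate whose values do not commute. This is where both hypotheses are genuinely needed---sparseness to collapse the commutators' supports down to a point, and the non-metabelian (indeed lawless) nature of $\Delta$ to ensure that the two surviving values can be chosen not to commute. If the generators of the particular construction do not already overlap in the required fashion, I would adjust finitely many of their outputs, or pass to suitable short words in the generators; since only finitely much data is altered, the asymptotic computation of $\mathcal{A}_{\Gamma}$ is unaffected.
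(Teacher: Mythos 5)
Your proposal is correct and follows essentially the same route as the paper: both use sparseness to produce commutators $[\hat{g}^{t^{q(j)-p(j)}},\hat{h}]$ and $[\hat{h},(\hat{g}^{t^{q(j)-p(j)}})^{-1}]$ supported on the single common coordinate $q(j)$, use the non-law $[[x,y],[y,x^{-1}]]$ (valid since $L(j)\geq 16$) to see their values there generate a nonabelian subgroup $\overline{\Delta}\leq\Delta$, and then embed $\overline{\Delta}\textnormal{ wr }\mathbb{Z}$ into $\Gamma$ and apply Gr\"{u}nberg. The only difference is that the paper needs no ad hoc adjustment of generator outputs in your ``middle step'': the defining property $v(g_l,h_l)\neq e$ for all short $v$ already supplies the required noncommuting values at the overlap point.
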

\begin{proof}
By Proposition \ref{not-rf}.
\end{proof}

\begin{qu}
For which functions $f : \mathbb{N} \rightarrow \mathbb{N}$
does there exist a finitely generated residually finite lawless group $\Gamma$
such that $\mathcal{A}_{\Gamma} \approx f$?
\end{qu}

\section{Preliminaries}

Let $\Gamma$ be a group with generating set $S$, and $\lvert \cdot \rvert_S : \Gamma \rightarrow \mathbb{N}$ denote the corresponding word-length function. Let $F_k$ be the free group of rank $k$. We denote by $\lvert \cdot \rvert$ the word length on $F_k$ induced by a fixed free basis $X=\{x_1,...,x_k\}$. For a word $w \in F_k$ and $g_1,...,g_k \in \Gamma$ we will use $w(g_1, ... , g_k)$ to denote the evaluation of the word $w$ under the substitution $x_i \mapsto g_i$. The set of \emph{laws} for $\Gamma$ in $F_k$ is defined to be 
\begin{equation*}
 L_k (\Gamma) \coloneq \bigcap_{\pi \in \mathrm{Hom}(F_k, \Gamma)} \ker(\pi) \setminus \{ 1 \}
\end{equation*}
and the set of \emph{non-laws} for $\Gamma$ in $F_k$ is defined to be $N_k (\Gamma) \coloneq F_k \setminus (L_k (\Gamma) \cup \{1\})$. A group is called \emph{lawless} if for some $k \geq 2$ (equivalently for all $k \geq 1$) that $L_k (\Gamma)$ is empty. In this case $N_k (\Gamma) = F_k \setminus \{1\}$.

For $w \in N_k (\Gamma)$, we define the \emph{complexity} of $w$ in $\Gamma$ 
(with respect to $S$) to be
\begin{equation*}
\chi_{\Gamma} ^S (w) 
\coloneq \min \big\lbrace \sum_{i=1} ^k \lvert g_i \rvert_S 
: g_i \in \Gamma , w(g_1, ... , g_k) \neq e \big\rbrace
\end{equation*}
and for $W \subseteq N_k (\Gamma)$, the \emph{$W$-lawlessness growth function} of $\Gamma$ (with respect to $S$) is defined to be
\begin{equation*}
\mathcal{A}_{\Gamma,W} ^S (l) 
\coloneq \max \big\lbrace \chi_{\Gamma} ^S (w) 
: w \in W; \lvert w \rvert \leq l \big\rbrace\text{.}
\end{equation*}

If $k \geq 2$ and $W = F_k \setminus \lbrace 1 \rbrace$ (which is only possible when $\Gamma$ is lawless) we denote $\mathcal{A}_{\Gamma,F_k \setminus \lbrace 1 \rbrace} ^S$ 
by  $\mathcal{A}_{\Gamma} ^S$ and refer to it simply 
as the \emph{lawlessness growth of $\Gamma$} 
(with respect to $S$). The following equivalence relation will show why the choice of $k \geq 2$ does not matter.

\begin{notn} \textnormal{\cite[Notation 2.2]{Brad}} \label{EquivNotn}
For nondecreasing functions 
$F_1 , F_2 : \mathbb{N} \rightarrow \mathbb{N}$ 
we write $F_1 \preceq F_2$ if there exists $K \in \mathbb{N}$ 
such that $F_1 (l) \leq K F_2 (Kl)$ for all $l \in \mathbb{N}$, 
and $F_1 \approx F_2$ if $F_1 \preceq F_2$ and $F_2 \preceq F_1$. 
It is clear that $\approx$ is an equivalence relation. 
\end{notn}

The following was proved as Remark 2.3 in \cite{Brad}.

\begin{propn} \label{free-gp}
Let $k,k' \in \mathbb{N}_{\geq 2}$. Then 
$\mathcal{A}_{\Gamma,F_k\setminus \lbrace 1 \rbrace} ^S 
\approx \mathcal{A}_{\Gamma,F_{k^{\prime}}\setminus \lbrace 1 \rbrace} ^S$.
\end{propn}

\begin{lem} \textnormal{\cite[Corollary 2.5]{Brad}}
Let $S_1$ and $S_2$ be finite generating sets for $\Gamma$. 
Then $\mathcal{A}_{\Gamma} ^{S_1} \approx\mathcal{A}_{\Gamma} ^{S_2}$. 
\end{lem}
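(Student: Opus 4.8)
The plan is to exploit the standard fact that word-length functions with respect to two finite generating sets are bi-Lipschitz equivalent, and then to observe that this equivalence passes through the definitions of $\chi_{\Gamma}^{S}$ and $\mathcal{A}_{\Gamma}^{S}$ without disturbing the index set over which the maximum in the definition of $\mathcal{A}_{\Gamma}^S$ is taken. First I would fix the comparison constant for the two metrics. Since $S_1$ and $S_2$ are both finite generating sets, each $s \in S_1$ may be written as a word in $S_2^{\pm 1}$ and conversely; letting $C$ be the maximum of $|s|_{S_2}$ over $s \in S_1$ together with $|s|_{S_1}$ over $s \in S_2$ (a finite quantity, which we may take to be at least $1$), the triangle inequality yields $|g|_{S_2} \leq C |g|_{S_1}$ and $|g|_{S_1} \leq C |g|_{S_2}$ for every $g \in \Gamma$.

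Next I would push this bound through to the complexity function. Fix $w \in N_k (\Gamma)$ and let $(g_1, \dots, g_k)$ be a tuple realising $\chi_{\Gamma}^{S_1}(w)$, so that $w(g_1, \dots, g_k) \neq e$ and $\sum_{i} |g_i|_{S_1} = \chi_{\Gamma}^{S_1}(w)$. This same tuple is admissible for the $S_2$-minimisation, so $\chi_{\Gamma}^{S_2}(w) \leq \sum_{i} |g_i|_{S_2} \leq C \sum_{i} |g_i|_{S_1} = C \, \chi_{\Gamma}^{S_1}(w)$. The symmetric argument gives $\chi_{\Gamma}^{S_1}(w) \leq C \, \chi_{\Gamma}^{S_2}(w)$.

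Finally I would transfer the inequality to $\mathcal{A}_{\Gamma}^S$. The only point requiring any care is that the word-length $|w|$ on $F_k$ is defined via a fixed free basis and is completely independent of $S_1$ and $S_2$; hence the index set $\lbrace w \in N_k(\Gamma) : |w| \leq l \rbrace$ appearing in the definition of $\mathcal{A}_{\Gamma}^{S_i}(l)$ is identical for $i = 1, 2$. Taking the maximum of the pointwise bound $\chi_{\Gamma}^{S_2}(w) \leq C \, \chi_{\Gamma}^{S_1}(w)$ over this common index set gives $\mathcal{A}_{\Gamma}^{S_2}(l) \leq C \, \mathcal{A}_{\Gamma}^{S_1}(l) \leq C \, \mathcal{A}_{\Gamma}^{S_1}(Cl)$, the last step using that $\mathcal{A}_{\Gamma}^{S_1}$ is nondecreasing and $C \geq 1$. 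This is precisely $\mathcal{A}_{\Gamma}^{S_2} \preceq \mathcal{A}_{\Gamma}^{S_1}$ with witness $K = C$ in Notation \ref{EquivNotn}, and the symmetric argument gives the reverse relation, so $\mathcal{A}_{\Gamma}^{S_1} \approx \mathcal{A}_{\Gamma}^{S_2}$. I anticipate no substantial obstacle: the statement is a formal consequence of bi-Lipschitz invariance of word metrics, and the relation $\approx$ is designed exactly to absorb the multiplicative constant $C$.
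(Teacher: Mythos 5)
Your proof is correct and is essentially the standard argument that the cited result (\cite[Corollary 2.5]{Brad}) relies on: bi-Lipschitz equivalence of the two word metrics, pushed through $\chi_{\Gamma}^{S}$ by reusing the optimal tuple, and then through the maximum defining $\mathcal{A}_{\Gamma}^{S}$, with the constant absorbed by $\approx$. The paper itself gives no proof here, only the citation, and your argument fills it in correctly, including the one point that needs care (the index set of words is independent of the generating set).
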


For a word $w \in F_k$ we define the \emph{vanishing set} of $w$ in $\Gamma$ by
\begin{equation*}
Z(w,\Gamma) \coloneq \lbrace (g_1, ... , g_k) \in \Gamma^k : w(g_1, ... , g_k)=e \rbrace\text{.}
\end{equation*}

\begin{lem} \label{KozThoProp} \textnormal{\cite[Lemma 2.2]{KozTho}}
Let $k \geq 2$ and let $w_1 , \ldots , w_m \in F_k$ be nontrivial. 
There exists $w \in F_k$ nontrivial such that, 
for any group $\Gamma$, 
\begin{center}
$Z(w,\Gamma) \supseteq Z(w_1,\Gamma) \cup \cdots \cup Z(w_m,\Gamma)$
\end{center}
and $\lvert w \rvert \leq 16 m^2 \max_i \lvert w_i \rvert$. 
\end{lem}

\begin{coroll} \label{KozThoCoroll} \textnormal{\cite[Corollary 2.9]{Brad}}
Let $\Gamma$ be a lawless group and let $k \geq 2$. 
Then for all $l \geq 1$, there exist $g_1 , \ldots , g_k \in \Gamma$ 
such that, for all $v \in F_k$ nontrivial with $\lvert v \rvert \leq l$, 
$v (g_1 , \ldots , g_k) \neq e$.   
\end{coroll}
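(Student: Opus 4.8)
The plan is to derive this as an immediate consequence of Proposition~\ref{KozThoProp}, exploiting the finiteness of the ball of radius $l$ in $F_k$ together with the definition of lawlessness. The entire combinatorial content sits in the cited proposition; the corollary is a clean repackaging.

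First I would fix $l \geq 1$ and enumerate the nontrivial elements of $F_k$ of length at most $l$ as $w_1, \ldots, w_m$. Such an enumeration exists and is \emph{finite}: since $F_k$ is finitely generated, for each fixed $l$ there are only finitely many reduced words of length at most $l$. This finiteness is exactly what is needed for Proposition~\ref{KozThoProp}, which treats finitely many words at a time, to apply. I would then apply that proposition to the list $w_1, \ldots, w_m$ to obtain a single nontrivial word $w \in F_k$ with the property that, for the group $\Gamma$ at hand,
\begin{equation*}
Z(w, \Gamma) \supseteq Z(w_1, \Gamma) \cup \cdots \cup Z(w_m, \Gamma).
\end{equation*}

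Next I would invoke lawlessness. Since $\Gamma$ is lawless we have $L_k(\Gamma) = \emptyset$, so the nontrivial word $w$ is a non-law; equivalently, $w$ does not vanish identically, i.e.\ $Z(w,\Gamma) \neq \Gamma^k$. I would therefore choose any tuple $(g_1, \ldots, g_k) \in \Gamma^k \setminus Z(w, \Gamma)$, so that $w(g_1, \ldots, g_k) \neq e$. By the containment above, this single tuple lies outside every $Z(w_i, \Gamma)$, whence $w_i(g_1, \ldots, g_k) \neq e$ for every $i$. As the $w_i$ are precisely the nontrivial words of length at most $l$, the tuple $(g_1, \ldots, g_k)$ has the required property.

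As for where the real difficulty lies: there is essentially no obstacle in the corollary itself, since the delicate step---amalgamating many vanishing sets into the vanishing set of one word of controlled length---is already handled by Proposition~\ref{KozThoProp}. The only points that warrant any care are the finiteness of the ball (so that the proposition is applicable) and the elementary translation between ``$w$ is a non-law'' and ``$Z(w,\Gamma) \subsetneq \Gamma^k$'', both of which are routine. I would expect the write-up to be only a few lines.
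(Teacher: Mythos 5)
Your argument is correct and is exactly the intended one: the paper cites this as \cite[Corollary 2.9]{Brad}, where it is deduced from Proposition~\ref{KozThoProp} in precisely the way you describe (finitely many short words, one amalgamated non-law, one witnessing tuple). Nothing further is needed.
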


\begin{lem} \label{universal-unlaw}
Suppose $\Gamma$ has a shortest law in $F_k$ of length at least $l$. Let 
\begin{equation*}
W \coloneq \{ w \in F_k\backslash\{1\} : \lvert w \rvert < l \}\textnormal{.}
\end{equation*}
Then there exists $\bar{g}_1, ... , \bar{g}_k \in \Gamma^W$ where $w(\bar{g}_1, ... , \bar{g}_k) \neq e$ for all $w \in W$.
\end{lem}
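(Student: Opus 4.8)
The plan is to use that every element of $W$ is a non-law for $\Gamma$, and then to package a separate witness for each such non-law into its own coordinate of the direct product $\Gamma^W$.

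First I would note that $W$ is finite, since there are only finitely many reduced words of length less than $l$ in $F_k$. Because the shortest law of $\Gamma$ in $F_k$ has length at least $l$, no word of length less than $l$ can be a law; thus every $w \in W$ lies in $N_k(\Gamma)$. Consequently, for each $w \in W$ there is a tuple $(g_1^w, \ldots, g_k^w) \in \Gamma^k$ with $w(g_1^w, \ldots, g_k^w) \neq e$ in $\Gamma$.

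Next I would define $\bar{g}_i \in \Gamma^W$ to be the element whose $w$-coordinate is $g_i^w$, for each $1 \leq i \leq k$. Since the group operations in $\Gamma^W = \prod_{w \in W} \Gamma$ are performed coordinate-wise, the word map is evaluated coordinate-wise: the $v$-coordinate of $w(\bar{g}_1, \ldots, \bar{g}_k)$ is $w(g_1^v, \ldots, g_k^v)$. In particular, for any fixed $w \in W$, the $w$-coordinate of $w(\bar{g}_1, \ldots, \bar{g}_k)$ equals $w(g_1^w, \ldots, g_k^w) \neq e$, and hence $w(\bar{g}_1, \ldots, \bar{g}_k) \neq e$ in $\Gamma^W$. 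As this holds for every $w \in W$, the tuple $(\bar{g}_1, \ldots, \bar{g}_k)$ has the desired property.

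This is a straightforward diagonalization and presents no genuine obstacle. The only point requiring care is the observation that word maps on a direct product act coordinate-wise, which is precisely what allows the single coordinate indexed by $w$ to certify that $w$ does not vanish on the tuple.
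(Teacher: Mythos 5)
Your proposal is correct and follows exactly the same argument as the paper's proof: each $w \in W$ is a non-law since the shortest law has length at least $l$, a witness tuple is chosen for each $w$, and these are assembled coordinate-wise into $\Gamma^W$ so that the $w$-coordinate of $w(\bar{g}_1, \ldots, \bar{g}_k)$ is nontrivial. The extra remark that word maps on a direct product act coordinate-wise is the (implicit) justification the paper also relies on.
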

\begin{proof}
We see each word $w \in F_k\backslash\{1\}$ where $\lvert w \rvert < l$ is not a law for $\Gamma$. Then for each $w \in W$ there exists $g^w_1, ... , g^w_k \in \Gamma$  where $w(g^w_1, ... , g^w_k) \neq e$. Define each $\bar{g}_i \in \Gamma^W$ by $(\bar{g}_i)_w \coloneq g^w_i$. Therefore $(w(\bar{g}_1, ... , \bar{g}_k))_w \neq e$ for all $w \in W$.
\end{proof}

Note a shortest law of $\Gamma$ is also a shortest law of $\Gamma^W$, which will be used in the following Section.

\section{Sparse wreath products}

\subsection{Construction} \label{ConstrSubsect}

Suppose $\Delta$ and $H$ are groups. Let $\Delta \textnormal{ wr } H \coloneq (\bigoplus_{H} \Delta) \rtimes H$ and $\Delta \Wr H \coloneq \Delta^{H} \rtimes H$ 
be respectively the \emph{restricted} and \emph{unrestricted} wreath product of $\Delta$ and $H$. Note that $\bigoplus_{H} \Delta  \trianglelefteq  \Delta^{H}$ and $\Delta \textnormal{ wr } H \leq \Delta \Wr H$.

\begin{lem} \label{SparseFnctLem} \textnormal{\cite[Lemma 4.1]{Brad}}
There exist increasing functions 
$p,q : \mathbb{N} \rightarrow \mathbb{N} \cup \lbrace 0 \rbrace$ 
such that $p(1)=q(1)=0$ and, for all $i,j,k,l \in \mathbb{N}$, 
\begin{itemize}
\item[(a)] For $r = p$ or $q$, 
if $r(j)-r(k)=r(l)-r(i)$ then either (i) $i=k$ and $j=l$, 
or (ii) $j=k$ and $i=l$; 

\item[(b)] If $q(j)-p(k)=q(l)-p(i)$ then $i=k$ and $j=l$. 

\end{itemize}
\end{lem}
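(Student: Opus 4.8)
The plan is to reinterpret the three stated conditions as Sidon-type ($B_2$) conditions and then to build the sequences greedily, one value at a time. First I would rewrite each hypothesis as a statement about sums rather than differences. Condition (a) for a function $r$ rearranges to: $r(i) + r(j) = r(k) + r(l)$ forces $\{i,j\} = \{k,l\}$, i.e.\ the image of $r$ is a Sidon set, so that a pairwise sum of two (not necessarily distinct) terms determines the \emph{unordered} pair of indices. Condition (b) rearranges to: $p(i) + q(j) = p(k) + q(l)$ forces $(i,j) = (k,l)$, a cross-Sidon condition asserting that the value $p(i) + q(j)$ determines the \emph{ordered} pair $(i,j)$. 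It therefore suffices to construct strictly increasing $p, q$ with $p(1) = q(1) = 0$ such that $\{p(n)\}$ and $\{q(n)\}$ are each Sidon and the combined condition (b) holds.

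I would construct the values by induction. Set $p(1) = q(1) = 0$, and suppose $p(1), \dots, p(n)$ and $q(1), \dots, q(n)$ have already been chosen so that all three conditions hold among indices at most $n$. To choose $p(n+1)$, note that the only way a new nontrivial coincidence can be created is through a linear equation in the single unknown $x = p(n+1)$, of one of the shapes $2x = \alpha$ or $x + \beta = \gamma$ coming from condition (a) applied to $p$, or $x + \delta = \varepsilon$ coming from condition (b), where $\alpha, \beta, \gamma, \delta, \varepsilon$ range over the finitely many sums of previously placed values. Each such equation has nonzero coefficient on $x$ and hence rules out only finitely many values of $x$; since only finitely many equations arise, the set of forbidden values is finite. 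I then take $p(n+1)$ to be any integer exceeding $p(n)$ that avoids this finite set, which is possible since $\mathbb{N}$ is infinite. The value $q(n+1)$ is chosen analogously, now avoiding in addition the finitely many values that would violate condition (a) for $q$ or the cross condition (b) against all placed $p(i)$ and $q(j)$. Making the choices alternately keeps both sequences strictly increasing and defined on all of $\mathbb{N}$, and a direct check shows the base case $p(1)=q(1)=0$ is consistent with all three conditions, since $0$ enters every equation only through a trivially matched slot.

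The main point to verify carefully—and essentially the only obstacle—is the claim that each greedy step forbids only finitely many values. This amounts to checking, for each of the three equation types, every way the new variable $x$ can be distributed among the four slots (appearing once or twice, on the left or the right) and confirming that, after substituting placed values into the remaining slots, one is left with a linear equation in $x$ whose leading coefficient never vanishes. The subtle cases are those in which $x$ appears with coefficient $2$, as in $2x = r(k) + r(l)$, and the asymmetric roles of $p$ and $q$ in condition (b), where $x$ is confined to the $p$-slots or the $q$-slots according to which sequence is being extended. Equations in which $x$ would have to occupy slots on both sides cancel the $x$-terms and merely reassert relations among already-placed values, so they impose no new constraint; after discarding these, only finitely many genuinely forbidden values remain to be excluded, completing the inductive step.
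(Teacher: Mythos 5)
Your proof is correct, but it takes a genuinely different route from the one the paper relies on. You recast (a) and (b) as Sidon-type conditions (the image of each of $p,q$ is a $B_2$ set, plus a cross condition on $p(i)+q(j)$) and run the standard greedy argument: at each step the new value must avoid the solution set of finitely many linear equations, each with nonvanishing coefficient on the unknown, so a valid choice always exists. The case analysis you flag as the only real obstacle (the unknown appearing with coefficient $2$, or in slots on both sides, and the asymmetry of the $p$- and $q$-slots in (b)) does check out. The paper, following \cite[Lemma 4.1]{Brad}, instead constructs $p$ and $q$ by the explicit super-increasing recursions $p(n+1)\geq p(n)+q(n)+1$ and $q(n+1)\geq p(n+1)+q(n)+1$ (recorded here as Lemma \ref{pq-other-def}) and verifies (a) and (b) directly by a domination argument: every difference involving a level-$(n+1)$ value strictly exceeds anything expressible at lower levels. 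The trade-off is worth noting: your greedy construction is more economical (it can produce slower-growing Sidon sequences) and is arguably the more standard additive-combinatorics argument, but the explicit recursive inequalities are exactly what the rest of the paper leverages --- Lemma \ref{pq-other-def} deduces from them that $(q(n)-p(n))$ is nondecreasing, and the proofs of Theorems \ref{faster-growth}, \ref{fast-f} and \ref{f-slow} need the freedom to impose further quantitative lower bounds on $q(m)-p(m)$. Your construction could be adapted to provide the same control (one may always choose the next value larger), but as written it establishes only the lemma as stated, not the strengthening the later sections actually use.
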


We shall require a slight strengthening of the statement of Lemma \ref{SparseFnctLem}, 
obtained by the same proof. 

\begin{lem} \label{pq-other-def}
Let $p,q : \mathbb{N} \rightarrow \mathbb{N} \cup \lbrace 0 \rbrace$ be non-decreasing functions. Let $p(1)=q(1)=0$. Suppose for all $n \in \mathbb{N}$
\begin{itemize}
\item[(i)] $p(n+1) \geq p(n) + q(n) + 1$
\item[(ii)] $q(n+1) \geq p(n+1) + q(n) + 1$
\end{itemize}
then $p,q$ satisfy the conditions of Lemma \ref{SparseFnctLem} and $(q(n)-p(n))$ is a non-negative non-decreasing sequence.
\end{lem}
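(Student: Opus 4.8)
The plan is to derive everything from the super-increasing nature of the two recurrences, splitting the conclusion into three parts: the (easy) monotonicity/non-negativity claim, condition (a) for each of $p$ and $q$, and the cross-condition (b), which I expect to be the crux. First I would record the elementary consequences of (i) and (ii). Since $q(n) \geq 0$, condition (i) gives $p(n+1) \geq p(n) + 1$, so $p$ is strictly increasing; likewise (ii) gives $q(n+1) \geq q(n) + 1$, so $q$ is strictly increasing. Condition (ii) also yields $q(n) \geq p(n)$ for all $n$ (with equality only at $n=1$, where both vanish), since $q(n+1) \geq p(n+1) + 1 > p(n+1)$, while condition (i) gives $p(n+1) > q(n)$. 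The claim that $(q(n) - p(n))$ is non-negative and non-decreasing is then immediate: non-negativity is $q(n) \geq p(n)$, and from (ii) one gets $q(n+1) - p(n+1) \geq q(n) + 1 > q(n) \geq q(n) - p(n)$.

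For condition (a) I would prove the Sidon-type statement $r(i) + r(j) = r(k) + r(l) \Rightarrow \{i,j\} = \{k,l\}$ for $r \in \{p,q\}$, noting that the two disjuncts in (a) are exactly the two matchings realizing this multiset equality (and that equality of differences rearranges to equality of these sums). After relabelling so that $l = \max(i,j,k,l)$, with $i \leq j$ and $k \leq l$ (the case $l = 1$ being trivial), the key inequality is $r(l) > 2r(l-1)$: for $r = p$ this follows from (i) together with $q(l-1) \geq p(l-1)$, and for $r = q$ from (ii) together with $p(l) > q(l-1)$. If $j < l$ then $r(i) + r(j) \leq 2r(l-1) < r(l) \leq r(k) + r(l)$, contradicting the equality; hence $j = l$, and strict monotonicity (injectivity) forces $i = k$.

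The main obstacle is condition (b), where the two sides mix $p$ and $q$ and no symmetry between them is available. Writing the hypothesis as $q(l) - q(j) = p(i) - p(k)$ and assuming $l > j$ (by swapping the two sides if necessary), I would bound the left-hand side below by $q(l) - q(l-1) \geq p(l) + 1$ via (ii), which forces $p(i) \geq p(l) + 1 > p(l)$, hence $i \geq l+1$, and also $p(i) > p(k)$, hence $k \leq i - 1$. Then (i) supplies the sharper lower bound $p(i) - p(k) \geq p(i) - p(i-1) \geq p(i-1) + q(i-1) + 1 > q(i-1)$, whereas the right-hand side satisfies $q(l) - q(j) \leq q(l)$; combining these gives $q(i-1) < q(l)$, so $i \leq l$, contradicting $i \geq l+1$. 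Therefore $l = j$, and then $p(i) = p(k)$ forces $i = k$.

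The delicate point throughout is the choice of which term to isolate as ``dominant''. In (b) the naive route of comparing $p(i) - p(k)$ against $p(l+1) > q(l)$ only yields consistency rather than a contradiction; the argument succeeds precisely because one compares the increment $p(i) - p(i-1)$ against the single value $q(l)$, using (i) to convert a $p$-increment into a lower bound in terms of $q$. Once this is set up, conditions (a) and (b) together are exactly the hypotheses of Lemma \ref{SparseFnctLem}, completing the proof.
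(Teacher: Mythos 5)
Your proof is correct, and I checked each step: the elementary consequences ($p,q$ strictly increasing, $q(n)\geq p(n)$, $p(n+1)>q(n)$), the doubling inequality $r(l)>2r(l-1)$ driving condition (a), and the contradiction $i\geq l+1$ versus $i\leq l$ in condition (b) all follow as you claim from (i) and (ii). The route is genuinely different from the paper's, though not because of a divergence in mathematical substance: the paper's proof of this lemma is essentially a citation. It observes that (i) and (ii) are exactly the recurrences $(4.1)$ and $(4.2)$ used in the proof of Lemma \ref{SparseFnctLem} in \cite{Brad}, and that the remainder of that proof establishes (a) and (b) from them; the only thing the paper verifies directly is the new claim that $q(n+1)-p(n+1)\geq q(n)+1$, which yields the non-negative non-decreasing sequence and which you prove identically. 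What your version buys is a self-contained argument that makes the mechanism visible --- in particular your isolation of the increment bound $p(i)-p(i-1)\geq q(i-1)+1$ as the step that breaks the $p$/$q$ asymmetry in (b) is exactly the right observation, and your remark that the naive comparison against $p(l+1)>q(l)$ only gives consistency correctly identifies why a cruder estimate would fail. The cost is length; the paper's choice to defer to \cite{Brad} is reasonable since the statement there is proved under precisely these hypotheses.
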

\begin{proof}
The conditions (i) and (ii) appear as $(4.1)$ and $(4.2)$ in the proof of Lemma 4.1 in \cite{Brad}. 
The subsequent proof of  Lemma 4.1 in \cite{Brad} then consists in showing that any functions $p$ and $q$ 
satisfying conditions (i) and (ii) also satisfy properties (a) and (b) above. 
We see in addition that from (i) and (ii) we have 
$q(1)-p(1)=0$ and $q(n+1)-p(n+1) \geq (p(n+1) + q(n) + 1) - p(n+1) \geq q(n) + 1$. 
\end{proof}

Henceforth let $\Delta$ be a group; let $(g_l),(h_l)$ be sequences of elements of $\Delta$; 
let $L : \mathbb{N} \rightarrow \mathbb{N}$ be a non-decreasing unbounded 
function to be determined, 
and let $p,q : \mathbb{N} \rightarrow \mathbb{N} \cup \lbrace 0 \rbrace$ be functions 
satisfying the conclusion of Lemma \ref{SparseFnctLem}. 

Let $\hat{g} , \hat{h} : \mathbb{Z} \rightarrow \Delta$ be defined by
\begin{itemize}
\item[(i)] For each $n \in \mathbb{N}$, 
$\hat{g} (p(n)) = g_{L(n)}$ and $\hat{h} (q(n)) = h_{L(n)}$; 
\item[(ii)] $\hat{g}(m)=e$ for $m \notin \im (p)$; 
\item[(iii)] $\hat{h}(m)=e$ for $m \notin \im (q)$. 
\end{itemize}

\begin{defn} \label{gamma}
Let $G = \Delta \Wr \mathbb{Z} = \Delta^{\mathbb{Z}} \rtimes \mathbb{Z}$ 
be the unrestricted wreath product of $\Delta$ and $\mathbb{Z}$, 
with the $\mathbb{Z}$-factor being generated by $t$. 
Let $S=S(L)=\lbrace \hat{g} , \hat{h} , t \rbrace$, for $\hat{g} , \hat{h} \in \Delta^{\mathbb{Z}}$ as above. 
The \emph{sparse wreath product} associated to the data $p,q,L,(g_l),(h_l)$ is 
the group $\Gamma = \Gamma(L) = \langle S(L) \rangle \leq G$.
\end{defn}

Recall that the class of \emph{elementary amenable} groups 
is the smallest class containing all finite groups and $\mathbb{Z}$, 
which is closed under subgroups; quotients; extensions and directed limits of groups. 

\begin{thm} \label{ele-am} \textnormal{\cite[Theorem 4.3]{Brad}}
Suppose $\Delta$ is elementary amenable. For any $p$, $q$, $L$, $(g_l)$, $(h_l)$ as above, 
the associated sparse wreath product $\Gamma$ is elementary amenable. 
\end{thm}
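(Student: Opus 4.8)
The plan is to prove that the sparse wreath product $\Gamma$ is elementary amenable whenever $\Delta$ is, by exhibiting $\Gamma$ as a subgroup of a group built from $\Delta$ and $\mathbb{Z}$ using only the operations under which the class of elementary amenable groups is closed. The key obstacle is that the ambient group $G = \Delta \Wr \mathbb{Z}$ is the \emph{unrestricted} wreath product, whose base group $\Delta^{\mathbb{Z}}$ is a direct \emph{product} (not a direct sum) of infinitely many copies of $\Delta$. An infinite direct product is not in general elementary amenable even when each factor is, since elementary amenability is closed under directed limits (unions of increasing chains) but not under arbitrary infinite products. So I cannot simply argue that $G$ itself is elementary amenable and invoke closure under subgroups.

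First I would identify $\Gamma = \langle \hat{g}, \hat{h}, t \rangle$ concretely inside $G$. Writing $B = \Delta^{\mathbb{Z}}$ for the base group, I would set $N \coloneq \Gamma \cap B$, the subgroup of $\Gamma$ lying in the base. Since $t$ projects onto the $\mathbb{Z}$-factor and $\hat{g}, \hat{h} \in B$, the quotient $\Gamma / N$ embeds into $\mathbb{Z}$, hence is either trivial or infinite cyclic; in either case it is elementary amenable. Thus by closure under extensions it suffices to show that the base-part $N$ is elementary amenable. The natural description of $N$ is as the subgroup of $B$ generated by $\{\hat{g}, \hat{h}\}$ together with all their $\mathbb{Z}$-conjugates $t^m \hat{g} t^{-m}$, $t^m \hat{h} t^{-m}$; equivalently $N$ is the normal closure in $\Gamma$ of $\hat{g}$ and $\hat{h}$ intersected with $B$.

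The heart of the argument is to show this $N$ is elementary amenable despite living in an infinite direct product. The point is that $N$ is generated by countably many elements $\{ t^m \hat{g} t^{-m}, t^m \hat{h} t^{-m} : m \in \mathbb{Z}\}$, each of which has \emph{finite support} in $\mathbb{Z}$ (indeed each $\hat{g}, \hat{h}$ is supported on the image of $p$ or $q$, but more to the point each individual generator is a single nontrivial coordinate, or at worst finitely supported). Consequently $N$ actually lies inside the \emph{restricted} direct sum $\bigoplus_{\mathbb{Z}} \Delta \leq B$, since any product of finitely many finitely-supported functions is finitely supported. I would then express $N$ as the directed union $N = \bigcup_{k} N_k$, where $N_k$ is the subgroup of $N$ generated by those generators supported inside the finite window $[-k, k] \subset \mathbb{Z}$. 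Each $N_k$ is a subgroup of the finite direct product $\Delta^{[-k,k]} = \Delta^{2k+1}$, which is elementary amenable by closure under finite direct products (a special case of extensions) and subgroups. Since $(N_k)$ is an increasing chain with union $N$, closure under directed limits gives that $N$ is elementary amenable.

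Finally I would assemble the pieces: $N$ is elementary amenable by the directed-limit argument above, $\Gamma/N \leq \mathbb{Z}$ is elementary amenable, so $\Gamma$ is elementary amenable by closure under extensions, completing the proof. The step I expect to require the most care is verifying that $N$ genuinely sits inside the restricted sum $\bigoplus_{\mathbb{Z}} \Delta$ rather than the full product $\Delta^{\mathbb{Z}}$ — this is where the specific structure of the generators $\hat{g}, \hat{h}$ (and the fact that conjugation by $t$ merely shifts supports without accumulating infinitely many coordinates in any single group element) is essential. Once that containment is established, the directed-union decomposition and the closure properties of the elementary amenable class do the rest routinely.
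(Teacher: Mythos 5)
There is a genuine gap at the heart of your argument for the base part $N = \Gamma \cap \Delta^{\mathbb{Z}}$. You claim that the generators $\hat{g}$, $\hat{h}$ (and hence their $t$-conjugates) have finite support, so that $N$ lies in the restricted sum $\bigoplus_{\mathbb{Z}} \Delta$. This is false: by construction $\hat{g}(p(n)) = g_{L(n)}$ for \emph{every} $n \in \mathbb{N}$, and $p$ is increasing (hence injective), so $\supp(\hat{g})$ contains $\lbrace p(n) : n \in \mathbb{N},\ g_{L(n)} \neq e \rbrace$, which is infinite in every case the theorem is meant to cover (the whole construction chooses $g_{L(n)}$ to witness the failure of laws, so in particular $g_{L(n)} \neq e$). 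The same holds for $\hat{h}$ on $\im(q)$. Consequently $N$ does \emph{not} sit inside the restricted sum, and your directed-union decomposition $N = \bigcup_k N_k$ with $N_k \leq \Delta^{2k+1}$ collapses: no finite window of $\mathbb{Z}$ contains the support of even a single generator.

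The outer skeleton of your proof (the extension of $N$ by $\Gamma/N \leq \mathbb{Z}$, and closure of the elementary amenable class under subgroups, extensions and directed limits) is the right one; what is missing is the correct reason that $N$ is elementary amenable. The point --- and this is exactly what the sparseness conditions of Lemma \ref{SparseFnctLem}, packaged as Lemma \ref{fin-supp}, are for --- is that although each generator of $N$ has infinite support, the supports of any two \emph{distinct} elements $\hat{f}_1 \neq \hat{f}_2$ of $X = \lbrace \hat{g}^{t^n}, \hat{h}^{t^n} : n \in \mathbb{Z} \rbrace$ meet in at most one point, while $\supp([\hat{f}_1,\hat{f}_2]) \subseteq \supp(\hat{f}_1) \cap \supp(\hat{f}_2)$. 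Hence every commutator of generators is supported on at most one coordinate; since conjugation inside $\Delta^{\mathbb{Z}}$ acts coordinatewise and preserves supports, the derived subgroup $[N,N]$ is generated by finitely supported elements and therefore lies in $\bigoplus_{\mathbb{Z}} \Delta$. Your directed-union argument then applies to $[N,N]$ rather than to $N$: the restricted sum is the directed union of the finite powers $\Delta^{[-k,k]}$, each elementary amenable, so $[N,N]$ is elementary amenable; $N/[N,N]$ is abelian; and $N$ is elementary amenable by closure under extensions. With that repair, the rest of your assembly goes through.
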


\begin{rmrk}
The relevance of Theorem \ref{ele-am} to the study of lawlessness 
is that elementary amenable groups do not contain nonabelian free subgroups. 
As such, by Theorem 1.1 of \cite{Brad}, they do not have bounded 
lawlessness growth. 
\end{rmrk}

We define the \emph{support} of $\hat{f} \in \Delta^{\mathbb{Z}}$ by $\supp (\hat{f}) \coloneq \lbrace n\in\mathbb{Z} : \hat{f}(n)\neq e \rbrace$. The following are useful properties coming from the proof of the previous Theorem.

\begin{lem} \label{fin-supp}
Let $X = \lbrace \hat{g}^{t^n} , \hat{h}^{t^n} : n\in\mathbb{Z} \rbrace$. Then
\begin{itemize}
\item[(a)] for every $\hat{f}_1 , \hat{f}_2 \in \Delta^{\mathbb{Z}}$, 
$\supp([\hat{f}_1 , \hat{f}_2]) \subseteq \supp (\hat{f}_1) \cap \supp (\hat{f}_2)$; 
\item[(b)] For all pairs of distinct elements $\hat{f}_1 , \hat{f}_2 \in X$, 
$\lvert \supp (\hat{f}_1) \cap \supp (\hat{f}_2) \rvert \leq 1$.
\end{itemize}
\end{lem}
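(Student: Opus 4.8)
The plan is to treat the two parts separately, since (a) is a pointwise computation in the direct product $\Delta^{\mathbb{Z}}$ while (b) reduces to the arithmetic properties of $p$ and $q$ recorded in Lemma \ref{SparseFnctLem}.

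For part (a), I would argue coordinate by coordinate. Since $\Delta^{\mathbb{Z}}$ is a direct product, multiplication and inversion are computed pointwise, so the commutator satisfies $[\hat{f}_1 , \hat{f}_2](m) = [\hat{f}_1(m) , \hat{f}_2(m)]$ for every $m \in \mathbb{Z}$. If $m \notin \supp(\hat{f}_1)$ then $\hat{f}_1(m) = e$, whence $[\hat{f}_1(m) , \hat{f}_2(m)] = e$; likewise if $m \notin \supp(\hat{f}_2)$. Thus any $m \in \supp([\hat{f}_1 , \hat{f}_2])$ must lie in both $\supp(\hat{f}_1)$ and $\supp(\hat{f}_2)$, giving the claimed inclusion. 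This step is immediate and should present no difficulty.

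For part (b), the first move is to identify the supports of the elements of $X$. Conjugation by $t$ acts on $\Delta^{\mathbb{Z}}$ by translating coordinates, so $\supp(\hat{f}^{t^n})$ is a translate of $\supp(\hat{f})$ by $n$ (up to the sign convention for the shift). By construction $\supp(\hat{g}) \subseteq \im(p)$ and $\supp(\hat{h}) \subseteq \im(q)$, so $\supp(\hat{g}^{t^n}) \subseteq \im(p) + n$ and $\supp(\hat{h}^{t^n}) \subseteq \im(q) + n$. I would then split into the three possible types of distinct pairs and show each intersection contains at most one integer. For a pair $\hat{g}^{t^i} , \hat{g}^{t^j}$ (necessarily $i \neq j$), a common support point gives $p(a) + i = p(b) + j$; two such points would yield $p(a) - p(b) = p(c) - p(d) = j - i$, and applying property (a) of Lemma \ref{SparseFnctLem} forces either that the two points coincide, or that $a = b$ and $c = d$, the latter giving $i = j$, a contradiction. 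The pair of $\hat{h}$-type elements is handled identically with $q$ in place of $p$. For a mixed pair $\hat{g}^{t^i} , \hat{h}^{t^j}$, two common support points give $q(b) - p(a) = q(d) - p(c) = i - j$, and property (b) of Lemma \ref{SparseFnctLem} immediately forces $a = c$ and $b = d$, so the two points again coincide. Since these exhaust all distinct pairs in $X$, every such intersection has size at most one.

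The bulk of this is bookkeeping, so the main thing to be careful about is faithfully translating the geometric statement ``at most one point of overlap'' into the algebraic hypotheses of Lemma \ref{SparseFnctLem}: matching the indices correctly when invoking (a) versus (b), and checking that the degenerate branch of (a) genuinely contradicts $i \neq j$ rather than merely being excluded by fiat. The only other point requiring a moment's attention is pinning down the shift convention for the $t$-action so that the translates $\im(p) + n$ and $\im(q) + n$ are recorded with consistent signs; once that is fixed, both parts follow directly.
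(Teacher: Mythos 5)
Your proof is correct and follows essentially the same route as the paper: part (a) is the same pointwise computation, and part (b) is exactly the paper's case split (same base letter versus mixed) reducing to parts (a) and (b) of Lemma \ref{SparseFnctLem}, which the paper invokes without writing out the index bookkeeping that you supply explicitly. No issues.
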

\begin{proof}
For (a), suppose $\hat{f}_i(n)=e$ for some $i\in\{1,2\}$. Then $[\hat{f}_1 , \hat{f}_2](n)=e$.

For (b), suppose $\lvert \supp (\hat{f}_1) \cap \supp (\hat{f}_2) \rvert \geq 2$. Then there exists distinct $n,m \in \mathbb{Z}$ where $\hat{f}_i(n) \neq e$ and $\hat{f}_i(m) \neq e$ for both $i = 1$ and $i = 2$. If $\hat{f}_1 = r^{t^k}$ and $\hat{f}_2 = r^{t^l}$ for some $r \in \{\hat{g}, \hat{h} \}$, then this would contradict Lemma~\ref{SparseFnctLem} (a). Otherwise, we would be in the situation where  $\hat{f}_1 = r^{t^k}$ and $\hat{f}_2 = s^{t^l}$ for distinct $r,s \in \{\hat{g}, \hat{h} \}$, which would contradict Lemma~\ref{SparseFnctLem} (b).
\end{proof}

Henceforth suppose $\Delta$ is a lawless group. 
Using Corollary \ref{KozThoCoroll}, for each $l \geq 1$ we may take $g_l,h_l \in \Delta$ 
such that for all $v \in F_2$ nontrivial, 
if $\lvert v \rvert \leq l$ then $v (g_l , h_l) \neq e$. Under these hypotheses, 
one can prove the following.

\begin{thm} \label{slow-A} \textnormal{\cite[Theorem 4.4]{Brad}}
For every unbounded non-decreasing function $f : \mathbb{N} \rightarrow \mathbb{N}$, 
there exists $L$ such that for all $n \in \mathbb{N}$, 
$\mathcal{A}_{\Gamma(L)} ^{S(L)}(n) \preceq f(n)$. 
\end{thm}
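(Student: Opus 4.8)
The plan is to bound $\mathcal{A}_{\Gamma}^{S}$ from above by exhibiting, for every nontrivial $w$ with $\lvert w \rvert \leq l$, an explicit pair $(\bar{g}, \bar{h}) \in \Gamma^2$ of small total $S$-word-length on which $w$ does not vanish; by Theorem \ref{free-gp} it suffices to work with $k = 2$. The idea is to exploit that $\hat{g}$ and $\hat{h}$ carry the ``test elements'' $g_{L(n)}, h_{L(n)}$ at the coordinates $p(n), q(n)$ respectively, and to use a power of $t$ to slide one support onto the other so that $w$ restricts, at a single coordinate of $\mathbb{Z}$, to the word map $w(g_{L(n)}, h_{L(n)})$ on $\Delta$, which we have arranged to be nontrivial.

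Concretely, I would take $\bar{h} := \hat{h}$ and let $\bar{g}$ be the conjugate of $\hat{g}$ by the power $t^{\pm(q(n)-p(n))}$ that moves the nontrivial value $g_{L(n)}$ from coordinate $p(n)$ to coordinate $q(n)$; for the $p,q$ of Lemma \ref{pq-other-def} the sequence $(q(n)-p(n))$ is non-negative and non-decreasing, so this shift is well-defined. Both $\bar{g}, \bar{h}$ lie in the base group $\Delta^{\mathbb{Z}} = \prod_{\mathbb{Z}} \Delta$, which is normal in $G$ and hence closed under conjugation by $t$; therefore $w(\bar{g}, \bar{h})$ again lies in $\Delta^{\mathbb{Z}}$ and, since each coordinate projection $\Delta^{\mathbb{Z}} \to \Delta$ is a homomorphism, it is computed coordinatewise, $w(\bar{g}, \bar{h})(x) = w(\bar{g}(x), \bar{h}(x))$. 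Evaluating at $x = q(n)$ gives $\bar{g}(q(n)) = \hat{g}(p(n)) = g_{L(n)}$ and $\bar{h}(q(n)) = h_{L(n)}$, so $w(\bar{g}, \bar{h})(q(n)) = w(g_{L(n)}, h_{L(n)})$. By the choice of $(g_l), (h_l)$ from Corollary \ref{KozThoCoroll}, this is nontrivial as soon as $\lvert w \rvert \leq L(n)$, whence $w(\bar{g}, \bar{h}) \neq e$. Since $\lvert \bar{g} \rvert_S \leq 2(q(n)-p(n)) + 1$ and $\lvert \bar{h} \rvert_S = 1$, we obtain $\chi_{\Gamma}^{S}(w) \leq 2(q(n) - p(n)) + 2$ whenever $\lvert w \rvert \leq L(n)$.

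To pass to $\mathcal{A}_{\Gamma}^{S}$, for each $l$ set $n(l) := \min\{ n : L(n) \geq l \}$, which exists as $L$ is unbounded; the previous paragraph then applies uniformly to all nontrivial $w$ with $\lvert w \rvert \leq l$, giving $\mathcal{A}_{\Gamma}^{S}(l) \leq 2(q(n(l)) - p(n(l))) + 2$. It remains to choose the still-free function $L$ so that the right-hand side is $\preceq f$. Writing $d(n) := q(n) - p(n)$ (non-decreasing and, since $q$ is increasing, unbounded), I would define $L$ by requiring $f(L(n-1) + 1) \geq d(n)$ for every $n$ (with the convention $L(0) = 0$) -- possible because $f$ is unbounded and each $d(n)$ is finite -- which makes $L$ non-decreasing, $\mathbb{N}$-valued and unbounded. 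For this $L$, if $n = n(l)$ then $L(n-1) + 1 \leq l$, so $d(n(l)) \leq f(L(n-1)+1) \leq f(l)$, and hence $\mathcal{A}_{\Gamma}^{S}(l) \leq 2 f(l) + 2 \preceq f(l)$.

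The conceptual heart is the coordinatewise-evaluation observation, which makes the single-coordinate non-vanishing transparent; I note that this upper bound uses nothing about the sparseness of the supports (Lemma \ref{fin-supp}), that hypothesis being what one needs instead for matching lower bounds. I expect the main technical nuisance to be the choice and verification of $L$: one must confirm that the implicitly defined $L$ is genuinely non-decreasing, unbounded and $\mathbb{N}$-valued, and handle the off-by-one in the definition of $n(l)$, so that the inequality $d(n(l)) \leq f(l)$ holds for every $l$ and not merely asymptotically.
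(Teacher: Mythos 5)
Your proposal is correct and follows essentially the same route as the cited proof of \cite[Theorem 4.4]{Brad}, which this paper reuses verbatim in Lemma \ref{lg-1} and Theorem \ref{equiv+constants}: the witness pair $(\hat{g}^{t^{q(n)-p(n)}},\hat{h})$, the coordinatewise evaluation at $q(n)$ giving $\chi_{\Gamma(L)}^{S(L)}(w)\leq 2(q(n)-p(n)+1)$ for $\lvert w\rvert\leq L(n)$, and then choosing $L$ to grow fast enough relative to $f$ and the fixed $p,q$. Your bookkeeping with $n(l)$ and the convention $L(0)=0$ correctly handles the off-by-one and the base case $q(1)-p(1)=0$, so nothing is missing.
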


Our main results, proved in Section \ref{MainProofsSect}, 
shall show that, under stronger assumptions on $\Delta$, $L$, $p$ and $q$ 
one can obtain a complementary lower bound on $\mathcal{A}_{\Gamma(L)} ^{S(L)}$.

\subsection{Groups not residually finite}

We recall the following well-known result of Gr\"{u}nberg \cite{Gruen}. 

\begin{thm} \label{rf-wr} 
Let $\Delta$ and $H$ be groups. Suppose $\Delta \textnormal{ wr } H$ is residually finite. Then either $\Delta$ is abelian or $H$ is finite.
\end{thm}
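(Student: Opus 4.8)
The plan is to prove the contrapositive: assuming that $\Delta$ is nonabelian and $H$ is infinite, I will show that $\Delta \textnormal{ wr } H$ is not residually finite by exhibiting a single nontrivial element lying in the kernel of every homomorphism to a finite group (equivalently, in the finite residual). Since $\Delta$ is nonabelian, fix $a,b \in \Delta$ with $c \coloneq [a,b] \neq e$. For $x \in \Delta$ and $h \in H$, write $x_{(h)} \in \bigoplus_H \Delta$ for the element equal to $x$ in the $h$-coordinate and to $e$ elsewhere. The candidate element is $c_{(e)} = [a_{(e)}, b_{(e)}]$, which is nontrivial in $\Delta \textnormal{ wr } H$; the goal is to show that $\rho(c_{(e)}) = e$ for every homomorphism $\rho$ from $\Delta \textnormal{ wr } H$ onto a finite group $Q$.

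First I would record that distinct coordinate copies commute: for $h \neq h'$, every element $x_{(h)}$ commutes with every $y_{(h')}$, since they have disjoint support in the restricted base group. Next I would bring in the infinitude of $H$ via a pigeonhole argument: the map $H \rightarrow Q \times Q$ given by $h \mapsto (\rho(a_{(h)}), \rho(b_{(h)}))$ has finite image but infinite domain, so there exist distinct $h, h' \in H$ with $\rho(a_{(h)}) = \rho(a_{(h')})$ and $\rho(b_{(h)}) = \rho(b_{(h')})$. Combining the two facts: since $a_{(h)}$ commutes with $b_{(h')}$ and $\rho(b_{(h')}) = \rho(b_{(h)})$, the images $\rho(a_{(h)})$ and $\rho(b_{(h)})$ commute in $Q$, whence $\rho(c_{(h)}) = [\rho(a_{(h)}), \rho(b_{(h)})] = e$, using $[a_{(h)}, b_{(h)}] = c_{(h)}$. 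Finally I would transport this back to the identity coordinate: since $H$ permutes the coordinate copies transitively by conjugation, conjugation by a suitable element of $H \leq \Delta \textnormal{ wr } H$ carries $c_{(e)}$ to $c_{(h)}$, so $\rho(c_{(e)})$ is conjugate in $Q$ to $\rho(c_{(h)}) = e$ and is therefore trivial. As $\rho$ was arbitrary, $c_{(e)}$ lies in the finite residual, and the theorem follows.

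The main obstacle is the combination step, where the nonabelian hypothesis and the restricted-product structure must be made to interact correctly. It is tempting to try to detect $c_{(e)}$ directly in a finite quotient such as $\Delta \textnormal{ wr } \overline{H}$ for a finite quotient $\overline{H}$ of $H$, but this would require a compatible homomorphism on the base that multiplies together the entries lying in each fibre, and for nonabelian $\Delta$ such ``folding'' is not well-defined as a homomorphism, so no such quotient is available. The content of the argument is precisely that the pigeonhole forces two \emph{commuting} coordinate copies to have identical images, and this collapses the commutator. The remaining ingredients --- that disjointly supported elements commute, that the commutator respects the coordinate embedding, and that $H$ acts transitively on coordinates by conjugation --- are routine verifications from the definition of the wreath product, and the precise convention for the conjugation action is immaterial since transitivity is all that is used.
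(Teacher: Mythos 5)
Your argument is correct, and it is the classical proof of Gr\"{u}nberg's theorem: the paper itself gives no proof of this statement, citing it directly from Gr\"{u}nberg \cite{Gruen}, and your pigeonhole argument (forcing two disjointly supported, hence commuting, coordinate copies to have equal images in any finite quotient, thereby killing the commutator $c_{(h)}$, and then using transitivity of the $H$-action on coordinates to kill $c_{(e)}$) is exactly the standard route. Every step checks out, including the correct observation that the restricted direct sum is what makes disjointly supported elements commute and that the nontrivial element $c_{(e)}$ lands in the finite residual.
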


\begin{propn} \label{not-rf}
Suppose that $\Delta$, $g_l$, $h_l$ satisfy the assumptions preceding Theorem \ref{slow-A} above. 
Then for any unbounded nondecreasing function $L:\mathbb{N}\rightarrow\mathbb{N}$ and any $p,q$ satisfying the conclusion of Lemma \ref{SparseFnctLem}, the associated sparse wreath product $\Gamma$ is not residually finite. 
\end{propn}
\begin{proof}
Consider the non-trivial word $w \in F_2$ defined by $w(x,y) \coloneq [[x,y],[y,x^{-1}]]$. Let $j \in \mathbb{N}$ be big enough such that $L(j) \geq 16$. Then $w (g_{L(j)},h_{L(j)}) \neq e$.

Consider the elements $\bar{g}, \bar{h} \in \Gamma$ defined by 
\begin{itemize}
\item[(i)] $\bar{g} \coloneq [\hat{g}^{t^{q(j)-p(j)}},\hat{h}]$; 
\item[(ii)] $\bar{h} \coloneq [\hat{h}, (\hat{g}^{t^{q(j)-p(j)}})^{-1}]$.
\end{itemize}
Then 
$\bar{g}(q(j)) = [\hat{g}^{t^{q(j)-p(j)}},\hat{h}](q(j)) = [g_{L(j)},h_{L(j)}] \neq e$ 
(since $w(x,y)$ is of length less than $16$) 
and similarly $\bar{h}(q(j)) \neq e$. By Lemma \ref{fin-supp} we see this is the only position $\bar{g}$ and $\bar{h}$ are supported.

The group $\overline{\Delta} \coloneq \langle \bar{g}(q(j)),\bar{h}(q(j)) \rangle \leq \Delta$ is non-abelian, as $[\bar{g}(q(j)),\bar{h}(q(j))] = w (g_{L(j)},h_{L(j)}) \neq e$. By Theorem \ref{rf-wr} the group $\overline{\Delta} \textnormal{ wr } \mathbb{Z}$ is not residually finite. 
As $\lbrace \bar{g} , \bar{h} , t \rbrace \subset \Gamma$ 
is a generating set for $\overline{\Delta} \textnormal{ wr } \mathbb{Z}$
we have shown $\overline{\Delta} \textnormal{ wr } \mathbb{Z} \leq \Gamma$ so $\Gamma$ is not residually finite.
\end{proof}

\section{Controlling lawlessness growth in sparse wreath products} \label{MainProofsSect}

\subsection{Amenable groups of fast lawlessness growth}

In this Subsection we will prove Theorem \ref{intro-faster-growth}. For a group $G$ let us denote the length of a shortest law (on any number of generators) by $\alpha (G)$.

\begin{lem} \label{sl-linear} Consider the group $\PSL_2 (p)$ where $p$ is prime. Then 
\begin{equation*}
(p-1)/3 \leq \alpha (\PSL_2 (p)) \leq 8p + 6.
\end{equation*}
\end{lem}
\begin{proof}
By \cite[Theorem 2]{Had} we see that $A_1(p) = \PSL_2 (p)$ has a lower bound on a shortest law, given by $(p-1)/3$. It is a known property of $\PSL_2 (p)$ that the order of every element divides either $p-1$,$p$ or $p+1$. Therefore the word $w \in F_2 \backslash \{1\}$ defined by $w(x,y) = [[yx^{p-1}y^{-1},x^{p}],x^{p+1}]$ is a law. Note that $\lvert w \rvert = 8p + 6$ which gives us the upper bound.
\end{proof}

\begin{rmrk}
Looking at the characteristic polynomial of elements in $\SL_2 (p)$, consider the possible eigenvalues of the elements. Examining the cases of repeated eigenvalues, unique eigenvalues splitting over $\mathbb{F}_p$, or neither of the previous but eigenvalues splitting over $\mathbb{F}_{p^2}$, leads to the element having order dividing $2p$, $p-1$ and $p+1$ respectively. This proof can then be adjusted to show elements of $\PSL_2 (p)$ divide either $p$, $p-1$ or $p+1$. 
Lemmas \ref{law-with-square} and \ref{law-without-square} below could be proved replacing $\PSL_2 (p)$ with $\SL_2 (p)$ but with worse bounds.
\end{rmrk}

Let $(p_n)$ be an unbounded sequence of primes (to be determined). Consider the group
\begin{equation} \label{DeltaPSLDefn}
\Delta \coloneq \bigoplus_{n=1}^\infty \PSL_2 (p_n)^{W_n}
\end{equation}
where $W_n \coloneq \{ w \in F_2 \backslash \{ 1 \} : \lvert w \rvert < (p_n-1)/3 \}$. Note that, by the lower bound on $\alpha (\PSL_2 (p))$ in Lemma \ref{sl-linear}, the group $\Delta$ is lawless. We see $\Delta$ is also locally finite, hence elementary amenable. By Lemma \ref{sl-linear} and Lemma \ref{universal-unlaw}, there exists elements $\bar{g}_n , \bar{h}_n \in \PSL_2 (p_n)^{W_n}$ where $w(\bar{g}_n , \bar{h}_n) \neq e$ for all $w \in W_n$. 

Consider our non-decreasing function $L : \mathbb{N} \rightarrow \mathbb{N}$ still yet to be determined. We will define our sequence $(p_n)$ by each term being the smallest prime where $L(n) < (p_n-1)/3$.

Now we have elements $g_{L(n)} \coloneq \bar{g}_n$ and $h_{L(n)} \coloneq \bar{h}_n$. 
Thus given functions $p$ and $q$ satisfying the conclusion of Lemma \ref{SparseFnctLem}, we may define $\hat{g} , \hat{h} : \mathbb{Z} \rightarrow \Delta$ as in Subsection \ref{ConstrSubsect}. 
Note that for $v \in F_2 \backslash \{1\}$ where $\lvert v \rvert \leq L(n)$ we get that $v \in W_n$ hence in $\Delta \Wr \mathbb{Z}$ we have: 
\begin{equation} \label{SWPLawlessEqn}
v(\hat{g}^{t^{q(n)-p(n)}},\hat{h})(q(n)) = v(g_{L(n)}, h_{L(n)}) \neq e. 
\end{equation}
We will need the following.

\begin{thm} \label{bertrand} \textnormal{(Bertrand's postulate)} For any integer $n>1$ there exists a prime $p$ such that $n<p<2n$.
\end{thm}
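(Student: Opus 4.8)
The plan is to give Erd\H{o}s's elementary proof, which extracts the desired prime from the arithmetic of the central binomial coefficient $\binom{2n}{n}$. The strategy is a counting argument: I would assume for contradiction that no prime $p$ satisfies $n < p < 2n$, and then bound $\binom{2n}{n}$ both above and below to reach a numerical inequality that fails once $n$ is large.

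First I would record the two analytic pillars. The lower bound comes from observing that $\binom{2n}{n}$ is the largest of the $2n+1$ terms summing to $(1+1)^{2n} = 4^n$, giving $\binom{2n}{n} \geq 4^n/(2n)$. The upper bound comes from analyzing the $p$-adic valuation $R(p,n)$ of $\binom{2n}{n}$ via Legendre's formula: each summand $\lfloor 2n/p^j \rfloor - 2\lfloor n/p^j \rfloor$ is $0$ or $1$ and vanishes once $p^j > 2n$, so that $p^{R(p,n)} \leq 2n$ for every prime $p$, and moreover $R(p,n) \leq 1$ whenever $p > \sqrt{2n}$.

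The key arithmetic input I would establish separately is the primorial bound $\prod_{p \leq x} p \leq 4^x$, proved by induction on $x$; the inductive step for odd $x = 2m+1$ splits the primes at $m+1$ and uses that every prime in the range $m+1 < p \leq 2m+1$ divides $\binom{2m+1}{m} \leq 4^m$. I would also prove the crucial vanishing fact that, for $n \geq 3$, no prime in the range $2n/3 < p \leq n$ divides $\binom{2n}{n}$, since such a prime appears exactly twice in $(2n)!$ and exactly twice in $(n!)^2$, so its valuation $R(p,n)$ is zero.

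Granting the contradiction hypothesis, the excluded interval $(n,2n)$ together with the vanishing fact forces every prime factor of $\binom{2n}{n}$ to lie below $2n/3$, and I would partition these into small primes $p \leq \sqrt{2n}$ (at most $\sqrt{2n}$ of them, each contributing a factor $\leq 2n$) and larger primes $\sqrt{2n} < p \leq 2n/3$ (each contributing a single factor $\leq p$, their product bounded by $4^{2n/3}$ via the primorial bound). Combining with the lower bound yields $4^{n/3} \leq (2n)^{1 + \sqrt{2n}}$, which is false for all sufficiently large $n$. The main obstacle, and the only genuinely non-asymptotic part, is then to dispatch the finitely many small cases below the threshold where this inequality begins to fail; I would handle these by exhibiting the explicit chain of primes $2, 3, 5, 7, 13, 23, 43, 83, 163, 317, 631$, in which each term is less than twice its predecessor, so that for every small $n$ some consecutive pair $p_i \leq n < p_{i+1} \leq 2p_i \leq 2n$ supplies the required prime.
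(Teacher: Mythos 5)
This statement is Bertrand's postulate, which the paper does not prove at all: it is quoted as a classical black-box input, used only to guarantee the bound $3L(n)+1 < p_n < 6L(n)+2$ on the primes chosen in the construction of $\Delta$. Your proposal is the standard Erd\H{o}s argument via the central binomial coefficient, and its outline is correct and complete: the lower bound $\binom{2n}{n}\geq 4^n/(2n)$, the Legendre-formula estimates $p^{R(p,n)}\leq 2n$ and $R(p,n)\leq 1$ for $p>\sqrt{2n}$, the primorial bound $\prod_{p\leq x}p\leq 4^x$, and the vanishing of $R(p,n)$ for $2n/3<p\leq n$ are exactly the right ingredients, and assembling them under the contradiction hypothesis does yield $4^{n/3}\leq (2n)^{1+\sqrt{2n}}$. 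The only place requiring care is the matching of your finite verification to the asymptotic threshold: the inequality $4^{n/3}\leq(2n)^{1+\sqrt{2n}}$ first fails around $n\approx 468$ if you estimate sharply, in which case your chain $2,3,5,7,13,23,43,83,163,317,631$ suffices, but the cruder estimates often used to break this inequality only give failure for $n\geq 4000$, which would force you to extend the chain (e.g.\ by $1259$, $2503$, $4001$). Since the paper offers no proof to compare against, there is nothing further to reconcile; your argument is a valid, self-contained justification of a result the authors simply cite.
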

Note by Bertrand's postulate we have that 
\begin{equation} \label{BertrandEqn}
3L(n)+1 < p_n < 6L(n)+2.
\end{equation}

\begin{lem} \label{law-with-square}
There is a law $\overline{w}_n \in F_2\backslash\{1\}$ for $\bigoplus_{i=1}^n \PSL_2 (p_i)^{W_i}$ where
\begin{equation*}
\lvert \overline{w}_n \rvert < n^2 (768L(n) + 352).
\end{equation*}
\end{lem}
\begin{proof}
Let $w_n \in F_2\backslash\{1\}$ be a law for $\PSL_2 (p_n)^{W_n}$ such that $\lvert w_n \rvert \leq 8p_n + 6$ as in Lemma \ref{sl-linear}. Then there exists a law $\overline{w}_n \in F_2\backslash\{1\}$ for $\bigoplus_{i=1}^n \PSL_2 (p_i)^{W_i}$ where
\begin{equation*}
\begin{aligned}
    \lvert \overline{w}_n \rvert & \leq 16 n^2 \max \{ \lvert w_i \rvert : 1 \leq i \leq n \} \text{ by Lemma \ref{KozThoProp}} \\
      & \leq 16 n^2 (8p_n + 6) \\
      & < 16 n^2 (8(6L(n)+2) + 6).
  \end{aligned}
\end{equation*}
\end{proof}

\begin{lem} \label{law-without-square}
Suppose $L:\mathbb{N} \rightarrow \mathbb{N}$ is defined such that $L(n+1) \geq 64L(n) + 29$ for all $n \in \mathbb{N}$. Then there exists a law $\overline{w}_n \in F_2\backslash\{1\}$ for $\bigoplus_{i=1}^n \PSL_2 (p_i)^{W_i}$ where
\begin{equation*}
\lvert \overline{w}_n \rvert < 3072L(n) + 1408.
\end{equation*}
\end{lem}
\begin{proof}
We construct $\overline{w}_n$ by an inductive process. We know $3L(n)+1 < p_n < 6L(n)+2$ so by Lemma \ref{sl-linear} there exists a law $w_n \in F_2\backslash\{1\}$ for $\PSL_2 (p_n)^{W_n}$ where 
\begin{equation*}
\lvert w_n \rvert \leq 8p_n + 6 < 48L(n)+22\textnormal{.}
\end{equation*}
Hence there exists $\overline{w}_{1}$  where $\lvert \overline{w}_{1} \rvert < 48L(1)+22 \leq 3072L(1) + 1408$.

Suppose we have found $\overline{w}_{n-1}$ as in the statement of the Theorem, with 
\begin{equation*}
\lvert \overline{w}_{n-1} \rvert < 3072L(n-1) + 1408 = 64 (48L(n-1)+22)\textnormal{.}
\end{equation*}
As $L(n) \geq 64L(n-1) + 29$ then $48L(n)+22 \geq 64 (48L(n-1)+22)$ hence
\begin{equation*}
\begin{aligned}
    \lvert \overline{w}_n \rvert & \leq 64 \max \{ \lvert \overline{w}_{n-1} \rvert , \lvert w_n \rvert \} \text{ by Lemma \ref{KozThoProp}} \\
      & < 64 \max \{ 64 (48L(n-1)+22), 48L(n)+22 \} \\
      & < 64 (48L(n)+22).
  \end{aligned}
\end{equation*}
\end{proof}

\begin{defn} \label{v_n}
Define the word $v_n \in F_{8}$ by
\begin{equation*}
v_n \coloneq \overline{w}_{n}([[x_1,x_2],[x_3,x_4]],[[x_5,x_6],[x_7,x_8]])
\end{equation*}
where $\overline{w}_{n}$ is a shortest law for $\bigoplus_{i=1}^n \PSL_2 (p_i)^{W_i}$. This can be seen as applying the commutator two times followed by $\overline{w}_{n}$.
\end{defn}

\begin{rmrk} \label{square-diff}
Note that using the bound on $\lvert \overline{w}_{n} \rvert$ from Lemma \ref{law-with-square} would mean there exists some $\alpha \in \mathbb{N}$ where $\lvert v_n \rvert \leq \alpha n^2 L(n)$ for all $n \in \mathbb{N}$, but using the bound from Lemma \ref{law-without-square} would mean there exists some $\alpha \in \mathbb{N}$ where $\lvert v_n \rvert \leq \alpha  L(n)$ for all $n \in \mathbb{N}$. 
\end{rmrk}

Henceforth $p,q : \mathbb{N} \rightarrow \mathbb{N} \cup \lbrace 0 \rbrace$ 
and $L : \mathbb{N} \rightarrow \mathbb{N}$ are nondecreasing functions, 
yet to be determined, with $p,q$ satisfying the conclusion of Lemma \ref{SparseFnctLem}. 
Let $\Delta$ be as in (\ref{DeltaPSLDefn}), with the sequence $(p_n)$ 
satisfying (\ref{BertrandEqn}), and let $\hat{g} , \hat{h} : \mathbb{Z} \rightarrow \Delta$ 
be as in the paragraph preceding Theorem \ref{bertrand}. 
We let $\Gamma = \Gamma(L) \leq \Delta \Wr \mathbb{Z}$ 
be the associated sparse wreath product generated by $S(L)=\lbrace \hat{g} , \hat{h} , t \rbrace$. 
Then $\Gamma$ is lawless (by (\ref{SWPLawlessEqn})) and elementary amenable (by Theorem \ref{ele-am}). 
In the remainder of this Note, we show that $\mathcal{A}_{\Gamma(L)} ^{S(L)}$ 
can be effectively controlled using our choice of $L$, $p$ and $q$.

\begin{lem} \label{cmplx-lwbnd}
Suppose $(q(n)-p(n))$ is a non-negative non-decreasing sequence and $v_n$ is defined as in Definition \ref{v_n}. Then for all $n \in \mathbb{N}$,
\begin{equation*}
\chi_{\Gamma(L)} ^{S(L)} (v_n) \geq (q(n)-p(n))/2.
\end{equation*}
\end{lem}

Note that functions $p$ and $q$ satisfying the hypothesis of Lemma \ref{cmplx-lwbnd} 
and the conclusion of Lemma \ref{SparseFnctLem} exist, by Lemma \ref{pq-other-def}. 

\begin{proof}
Let $\bar{z}=(z_1,...,z_8)$ for $z_i \in \Gamma $ be such that $\sum_{i=1} ^8 \lvert z_i \rvert_{S(L)} < (q(n)-p(n))/2$. Since $\Gamma$ is generated by $S = \lbrace \hat{g} , \hat{h} , t \rbrace$ we can express these elements as $z_i = (\theta,t^a) \in \Delta^{\mathbb{Z}} \rtimes \mathbb{Z}$ where 
\begin{equation} \label{gen-form}
    \theta = (\hat{h}^{\alpha_1})^{t^{\beta_1}}(\hat{g}^{\alpha_2})^{t^{\beta_2}}...(\hat{h}^{\alpha_{k-1}})^{t^{\beta_{k-1}}}(\hat{g}^{\alpha_k})^{t^{\beta_k}}.
\end{equation}
for some $\alpha_i , \beta_j \in \mathbb{Z}$. 
We choose an expression of the form (\ref{gen-form}) for $\theta$ minimizing the quantity 
$B_i = \max \lbrace \lvert \beta_m \rvert : 1 \leq m \leq k \rbrace$, which we will call the \textit{maximal shift} in $z_i$. Note that $B_i \leq \lvert z_i \rvert_{S(L)}$ and $\lvert a \rvert \leq \lvert z_i \rvert_{S(L)}$. Consider $z_j = (\psi,t^b)$ with its own expression as in equation (\ref{gen-form}), and its own maximal shift $B_j$. Let
\begin{equation*}
\begin{aligned}
z_{ij} \coloneq [z_i,z_j] & = (\theta,t^a)(\psi,t^b)((\theta^{-1})^{t^{-a}},t^{-a})((\psi^{-1})^{t^{-b}},t^{-b}) \\
& = (\theta\psi^{t^a},t^{a+b})((\theta^{-1})^{t^{-a}}(\psi^{-1})^{t^{-a-b}},t^{-a-b}) \\
& = (\theta \psi^{t^a}(\theta^{-1})^{t^b}(\psi^{-1}),e)\textnormal{.}
\end{aligned}
\end{equation*}
Substituting our expressions for $\theta$ and $\psi$ into $\theta \psi^{t^a}(\theta^{-1})^{t^b}(\psi^{-1})$ gives an expression for $z_{ij}$ as in equation (\ref{gen-form}). We can see that 
the maximal shift $B_{ij}$ in $z_{ij}$ satisfies
\begin{equation*}
\begin{aligned}
B_{ij} & \leq \max \{ B_i ,  B_j  + \lvert a \rvert, B_i + \lvert b \rvert, B_j \} \\
& \leq \lvert z_i \rvert_{S(L)} + \lvert z_j \rvert_{S(L)} \\
& < (q(n)-p(n))/2 \textnormal{.}
\end{aligned}
\end{equation*}
Now consider distinct $i,j,k,l \in \{1,..,8\}$ and the two elements $z_{ij},z_{kl} \in \Delta^{\mathbb{Z}} \times \{e\} = \Delta^{\mathbb{Z}}$ constructed in the same way. Let $z_{ijkl} \coloneq [z_{ij},z_{kl}] \in \Delta^{\mathbb{Z}}$ which gives an expression as in equation (\ref{gen-form}).

Remembering the way $\hat{g} , \hat{h} : \mathbb{Z} \rightarrow \Delta$ are defined, for distinct $x,y \in \mathbb{N}$ we can see $[g_{L(x)},h_{L(y)}] = [g_{L(x)},g_{L(y)}] = [h_{L(x)},h_{L(y)}] = e$ as the support of each element in the sum $\Delta = \bigoplus_{i=1}^\infty \PSL_2 (p_i)^{W_i}$ has empty intersection. It is clear $[g_{L(x)},g_{L(x)}] = [h_{L(x)},h_{L(x)}] = e$ but it is possible that $[g_{L(x)},h_{L(x)}] \neq e$.

Looking at $z_{ijkl}$ in terms of its induced expression of the form of equation (\ref{gen-form}), for $(z_{ijkl})_r \in \Delta$ to contain some $[g_{L(x)}^{\alpha_c},h_{L(x)}^{\alpha_d}]$ would require some $(\hat{g}^{\alpha_c})^{t^{\beta_c}}$ in the chosen expression (as in equation (\ref{gen-form})) of $z_{ij}$ or $z_{kl}$, and some $(\hat{h}^{\alpha_d})^{t^{\beta_d}}$ in the chosen expression of $z_{ij}$ or $z_{kl}$ where $\lvert \beta_c \rvert + \lvert \beta_d \rvert \geq q(x)-p(x)$. We know $ B_{ij} < (q(n)-p(n))/2 $ and $B_{kl} < (q(n)-p(n))/2 $ hence $\lvert \beta_c \rvert + \lvert \beta_d \rvert < q(n)-p(n)$. Therefore $(z_{ijkl})_r \in \bigoplus_{i=1}^n \PSL_2 (p_i)^{W_i}$ for all $r \in \mathbb{Z}$ and so $z_{ijkl} \in (\bigoplus_{i=1}^n \PSL_2 (p_i)^{W_i})^{\mathbb{Z}} \leq \Delta^{\mathbb{Z}}$. By the definition of $\overline{w}_{n}$ in Definition \ref{v_n}, any two elements $\mu , \phi \in \bigoplus_{i=1}^n \PSL_2 (p_i)^{W_i}$ satisfy $\overline{w}_{n}(\mu , \phi)=e$. Therefore $v_n(\bar{z})=\overline{w}_{n}(z_{1234} , z_{5678})=e$.
\end{proof}

We can now complete the proof of Theorem \ref{intro-faster-growth} from the Introduction. 

\begin{thm} \label{faster-growth}
Let $f : \mathbb{N} \rightarrow \mathbb{N}$ be an unbounded nondecreasing function. There exists an elementary amenable lawless group $\Gamma$, generated by a finite set $S$ such that for all $n\in\mathbb{N}$, 
\begin{equation*}
\mathcal{A}_{\Gamma} ^S (n) \succeq f (n).
\end{equation*}
\end{thm}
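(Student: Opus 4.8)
The plan is to reuse the sparse wreath product $\Gamma = \Gamma(L)$ already assembled above, which is lawless by (\ref{SWPLawlessEqn}) and elementary amenable by Theorem \ref{ele-am} for every admissible choice of $L$, $p$, $q$; thus the whole content of the theorem is to arrange $\mathcal{A}_{\Gamma}^{S} \succeq f$. The engine is the complexity lower bound of Theorem \ref{cmplx-lwbnd}, which I would drive by making the support spacing $q(n) - p(n)$ grow as quickly as we please, decoupled entirely from the word-lengths $|v_n|$. Since the test words $v_n$ live in $F_8$, I will produce the estimate for $\mathcal{A}_{\Gamma, F_8 \setminus \{1\}}^{S}$ and then pass to $\mathcal{A}_{\Gamma}^{S} = \mathcal{A}_{\Gamma, F_2 \setminus \{1\}}^{S}$ via Theorem \ref{free-gp}.

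First I would fix the vertical data $L$ (any unbounded non-decreasing function suffices; taking $L(n+1) \geq 64 L(n) + 29$ additionally makes Theorem \ref{law-without-square} available, so that $|v_n| \leq \alpha L(n)$ for a constant $\alpha$, though for the present one-sided estimate only the finiteness of $|v_n|$ is actually needed). This fixes the primes $(p_n)$, the group $\Delta$, the laws $\overline{w}_n$, and the words $v_n$ of Definition \ref{v_n}. As $\overline{w}_n$ is in particular a law for $\PSL_2(p_n)$, Theorem \ref{sl-linear} gives $|\overline{w}_n| \geq (p_n-1)/3 > L(n)$, whence $|v_n| \to \infty$. Crucially, $(|v_n|)$ is now a fixed, finite, divergent sequence of positive integers, determined entirely by $L$ and independent of the horizontal data $p, q$.

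Next I would build $p, q$ via Lemma \ref{pq-other-def} so that, on top of the sparseness conditions (i), (ii), the gap $d(n) := q(n) - p(n)$ satisfies $d(n) \geq 2 f(|v_{n+1}|)$ for all $n \geq 2$. This is the crux, and it is possible precisely because (i), (ii) constrain $d$ only from below: putting $p(n+1) = p(n) + q(n) + 1$ and $q(n+1) = p(n+1) + d(n+1)$ with $d(n+1) := \max\{q(n) + 1,\ 2 f(|v_{n+2}|)\}$ satisfies (i), (ii) while leaving us free to take $d$ as large as we like (the value $|v_{n+2}|$ is already known, $L$ having been fixed). By Lemma \ref{pq-other-def} these $p, q$ satisfy the conclusion of Lemma \ref{SparseFnctLem} and $(d(n))$ is non-negative non-decreasing, so Theorem \ref{cmplx-lwbnd} applies and yields
\begin{equation*}
\mathcal{A}_{\Gamma, F_8 \setminus \{1\}}^{S}(|v_n|) \geq \chi_{\Gamma}^{S}(v_n) \geq d(n)/2 \geq f(|v_{n+1}|) \qquad (n \geq 2).
\end{equation*}

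Finally I would read off the lower bound by a covering argument needing no monotonicity of $(|v_n|)$. Given $l \geq |v_2|$, set $n^{*} = \max\{n : |v_n| \leq l\}$, which is finite since $|v_n| \to \infty$ and satisfies $n^{*} \geq 2$; then $|v_{n^{*}}| \leq l < |v_{n^{*}+1}|$, so monotonicity of $f$ and of $\mathcal{A}$ gives $f(l) \leq f(|v_{n^{*}+1}|) \leq \mathcal{A}_{\Gamma, F_8 \setminus \{1\}}^{S}(|v_{n^{*}}|) \leq \mathcal{A}_{\Gamma, F_8 \setminus \{1\}}^{S}(l)$. The finitely many values $l < |v_2|$ are absorbed into the constant of Notation \ref{EquivNotn}, so $f \preceq \mathcal{A}_{\Gamma, F_8 \setminus \{1\}}^{S} \approx \mathcal{A}_{\Gamma}^{S}$, i.e. $\mathcal{A}_{\Gamma}^{S} \succeq f$. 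I expect the main obstacle to be conceptual rather than computational: recognising that the length scale $|v_n|$ (controlled by $L$) and the complexity scale $d(n)/2$ (controlled by $q - p$) are independent knobs, and verifying that the sparseness requirements of Lemma \ref{pq-other-def} never obstruct inflating $d(n)$. This is exactly the regime opposite to that of Theorem \ref{slow-A}, where the same freedom was used to shrink $\mathcal{A}_{\Gamma}^{S}$ rather than enlarge it.
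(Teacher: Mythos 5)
Your proposal is correct and follows essentially the same route as the paper: the same sparse wreath product over $\bigoplus_n \PSL_2(p_n)^{W_n}$, the same test words $v_n$, and the same engine (Theorem \ref{cmplx-lwbnd}) driven by inflating $q-p$ through Lemma \ref{pq-other-def}, with the passage from $F_8$ to $F_2$ via Theorem \ref{free-gp}. The only divergence is bookkeeping: the paper calibrates $(q(m)-p(m))/2 \geq f(L(m+1))$ and uses $\lvert v_m \rvert \leq \alpha L(m)$ (hence invokes the growth hypothesis on $L$ from Theorem \ref{law-without-square}), whereas you calibrate $q(n)-p(n)$ against $f(\lvert v_{n+1}\rvert)$ directly and run a covering argument on the scales $\lvert v_n\rvert$ (noting $\lvert v_n\rvert \geq (p_n-1)/3 > L(n) \to \infty$, since $v_n$ is itself a law for $\PSL_2(p_n)$), which correctly renders that hypothesis on $L$ unnecessary for this one-sided bound.
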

\begin{proof}
Let $L$ satisfy the conditions of Lemma \ref{law-without-square}, so there exists some $\alpha \in \mathbb{N}$ where $\lvert v_m \rvert \leq \alpha L(m)$ for all $m \in \mathbb{N}$. Let $\beta \coloneq L(2)$ and suppose $L(m) \leq \beta n \leq L(m+1)$. 
Let $p,q$ satisfy the conditions of Lemma \ref{pq-other-def} and: 
\begin{equation} \label{in-f}
(q(m)-p(m))/2 \geq f(L(m+1))
\end{equation}
for all $m \geq 2$. 
Then by Lemma \ref{cmplx-lwbnd}, $\chi_{\Gamma(L)} ^{S(L)} (v_n) \geq (q(n)-p(n))/2$.
Thus: 
\begin{equation*}
\mathcal{A}_{\Gamma(L)} ^{S(L)} (\alpha \beta n) \geq \mathcal{A}_{\Gamma(L)} ^{S(L)} (\alpha L(m)) \geq (q(m)-p(m))/2 \geq f(L(m+1)) \geq f(\beta n) \geq f(n).
\end{equation*}
As noted immediately before Lemma \ref{cmplx-lwbnd}, 
$\Gamma(L)$ is lawless and elementary amenable.
\end{proof}

\begin{rmrk} \label{square-difference}
Theorem \ref{faster-growth} could be proved without $L$ satisfying Lemma \ref{law-without-square}, by using Lemma \ref{law-with-square} instead. However, this difference in method would require $(q(n)-p(n))$ to grow too fast for the equivalence later proved in Lemma \ref{equiv+constants}.
\end{rmrk}

\subsection{Amenable groups of prescribed lawlessness growth}

In this Subsection we will prove Theorem \ref{intro-fast-f} and Theorem \ref{intro-f-slow}. 
We retain the notation and assumptions of the paragraph preceding Lemma \ref{cmplx-lwbnd}.

\begin{lem} \label{lg-1-split}
Assume $L$ satisfies the conditions of Lemma \ref{law-without-square} and $p,q$ satisfy the conditions of Lemma \ref{cmplx-lwbnd}. Then,
\begin{equation*}
\mathcal{A}_{\Gamma(L)}^{S(L)}(L(m)) \succeq q(m)-p(m).  
\end{equation*}
\end{lem}
\begin{proof}
By the bound on the length of $\overline{w}_m$ in Lemma \ref{law-without-square} there exists some constant $\alpha \in \mathbb{N}$ where $\lvert v_m \rvert \leq \alpha L(m) \leq L( \alpha m)$ for all $m \in \mathbb{N}$. By Lemma \ref{cmplx-lwbnd} we get that $\chi_{\Gamma(L)} ^{S(L)} (v_m) \geq (q(m)-p(m))/2$. Putting these results together shows $\mathcal{A}_{\Gamma(L)}^{S(L)}(L(m)) \succeq q(m)-p(m)$.
\end{proof}

\begin{lem} \label{lg-1}
Assume $L$ satisfies the conditions of Lemma \ref{law-without-square} and $p,q$ satisfy the conditions of Lemma \ref{cmplx-lwbnd}. Then,
\begin{equation*}
\mathcal{A}_{\Gamma(L)}^{S(L)}(L(m+1)) \approx q(m+1)-p(m+1).  
\end{equation*}
\end{lem}
\begin{proof}
 Owing to Lemma \ref{lg-1-split} we only need to show $\mathcal{A}_{\Gamma(L)}^{S(L)}(L(m+1)) \preceq q(m+1)-p(m+1)$. Let $w \in F_2$ be nontrivial with $\lvert w \rvert \leq L(m)$. Then 
\begin{equation*}
 w( \hat{g}^{t^{q(m)-p(m)}},\hat{h} )( q(m) ) 
 = w( g_{L(m)},h_{L(m)} ) \neq e
\end{equation*} 
hence 
\begin{equation*}
\chi_{\Gamma(L)} ^{S(L)} (w) \leq \lvert \hat{g}^{t^{q(m)-p(m)}} \rvert_{S(L)} + \lvert \hat{h} \rvert_{S(L)}
\leq 2 ( q(m)-p(m)  + 1) \text{.}
\end{equation*}
If $m \geq 2$ then $2 ( q(m)-p(m)  + 1 ) \leq 4(q(m)-p(m))$ so $\mathcal{A}_{\Gamma(L)}^{S(L)}(L(m+1)) \preceq q(m+1)-p(m+1)$.
\end{proof}

Note that $q(1)-p(1) = 0$ which is why we use $L(m+1)$ instead of $L(m)$.

\begin{rmrk} \label{up-lw}
Note the lower bound of $\mathcal{A}_{\Gamma(L)}^{S(L)}(n)$ uses words in $F_8$ while the upper bound uses words in $F_2$. This difference does not matter owing to Proposition \ref{free-gp}. 
\end{rmrk}

\begin{lem} \label{equiv+constants}
Let $p,q$ satisfy Lemma \ref{pq-other-def} and $L$ satisfy Lemma \ref{law-without-square}. Let $f : \mathbb{N} \rightarrow \mathbb{N}$ be an unbounded non-decreasing function. Suppose there exists constants $K,C \in \mathbb{N}$ where   
\begin{itemize}
\item[(i)] $K(q(Km)-p(Km))/2 \geq f(L(m+1))$
\item[(ii)] $Cf(CL(m)) \geq 2 ( q(m+1)-p(m+1) + 1) $
\end{itemize}
then there exists an elementary amenable lawless group $\Gamma$, generated by a finite set $S$ such that for all $n\in\mathbb{N}$, 
$\mathcal{A}_{\Gamma} ^S (n) \approx f (n)$.
\end{lem}
\begin{proof}
Repeating the proof of Theorem \ref{faster-growth} but instead defining $p,q$ to satisfy (i) rather than (\ref{in-f}) gives us 
\begin{equation*}
K\mathcal{A}_{\Gamma(L)} ^{S(L)} (\alpha \beta Kn) \geq K\mathcal{A}_{\Gamma(L)} ^{S(L)} (\alpha L(Km)) \geq K(q(Km)-p(Km))/2 \geq f(L(m+1)) \geq f(\beta n)
\end{equation*}
so that $\mathcal{A}_{\Gamma} ^S (n) \succeq f (n)$.

To get $\mathcal{A}_{\Gamma} ^S (n) \preceq f (n)$ we will repeat the proof of \cite[Theorem 4.4]{Brad} but instead requiring $f$ to satisfy (ii). Let $w \in F_2$ be nontrivial with $\lvert w \rvert \leq L(m)$. Then 
\begin{equation*}
 w( \hat{g}^{t^{q(m)-p(m)}},\hat{h} )( q(m) ) 
 = w( g_{L(m)},h_{L(m)} ) \neq e
\end{equation*} 
hence 
\begin{equation*}
\chi_{\Gamma(L)} ^{S(L)} (w) \leq \lvert \hat{g}^{t^{q(m)-p(m)}} \rvert_{S(L)} + \lvert \hat{h} \rvert_{S(L)}
\leq 2 ( q(m)-p(m)  + 1) \text{.}
\end{equation*}
Then for $n \leq L(1)$ we have $\mathcal{A}_{\Gamma(L)} ^{S(L)} (n) \leq \mathcal{A}_{\Gamma(L)} ^{S(L)} (L(1)) \leq 2 ( q(1)-p(1) + 1) = 2$ and for $L(m)\leq n \leq L(m+1)$ we have 
\begin{equation*}
\mathcal{A}_{\Gamma(L)} ^{S(L)} (n) \leq \mathcal{A}_{\Gamma(L)} ^{S(L)} (L(m+1)) \leq 2 ( q(m+1)-p(m+1)  + 1) \leq Cf(CL(m)) \leq Cf(Cn) 
\end{equation*}
hence $\mathcal{A}_{\Gamma} ^S (n) \preceq f (n)$ and so $\mathcal{A}_{\Gamma} ^S (n) \approx f (n)$.
\end{proof}

\begin{thm} \label{fast-f}
 Let $f : \mathbb{N} \rightarrow \mathbb{N}$ be an unbounded non-decreasing function. Suppose there exists a constant $M \in \mathbb{N}$ where $f(Mn) \geq 9f(n)$ for all $n \in \mathbb{N}$. Then there exists an elementary amenable lawless group $\Gamma$, generated by a finite set $S$ such that for all $n\in\mathbb{N}$, 
$\mathcal{A}_{\Gamma} ^S (n) \approx f (n)$.
\end{thm}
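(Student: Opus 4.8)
The plan is to derive Theorem~\ref{fast-f} as a direct application of Theorem~\ref{equiv+constants}. Thus it suffices to exhibit non-decreasing functions $p,q$ satisfying Lemma~\ref{pq-other-def}, a function $L$ satisfying the hypothesis of Theorem~\ref{law-without-square}, and constants $K,C \in \mathbb{N}$ realizing conditions (i) and (ii) of Theorem~\ref{equiv+constants}. The entire argument is powered by the single hypothesis $f(Mn) \geq 9f(n)$, which I will exploit by forcing $L$ to grow geometrically with ratio at least $M$, so that $f \circ L$ inherits a clean geometric lower bound.

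First I would set $N := \max\{M, 65\}$ and define $L(1) := 29$ and $L(n+1) := N\,L(n)$, so that $L(n) = 29\,N^{n-1}$. Since $N \geq 65$ and $L(n) \geq 29$, we have $L(n+1) = N L(n) \geq 65 L(n) \geq 64 L(n) + 29$, so $L$ satisfies Theorem~\ref{law-without-square}; and since $N \geq M$, the hypothesis on $f$ yields the key estimate $f(L(m+1)) \geq f(M L(m)) \geq 9 f(L(m))$ for all $m$. Next I would prescribe the difference $d(m) := q(m) - p(m)$ to track $f \circ L$ by setting $p(1) = q(1) = 0$ and, for $n \geq 2$,
\begin{equation*}
q(n) := f(L(n+1)) - 1, \qquad p(n) := f(L(n+1)) - f(L(n)) - 1,
\end{equation*}
so that $d(n) = f(L(n))$ for $n \geq 2$ while $d(1) = 0$.

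The crux is verifying that these $p,q$ meet the two recursive inequalities of Lemma~\ref{pq-other-def}. For $n \geq 2$ condition (ii) holds \emph{with equality}, since $p(n+1) + q(n) + 1 = f(L(n+2)) - 1 = q(n+1)$; and condition (i) reduces to $f(L(n+2)) \geq 3 f(L(n+1)) - f(L(n))$, which is immediate from $f(L(n+2)) \geq 9 f(L(n+1))$. The transition step at $n=1$, together with non-negativity and monotonicity of $p$ and $q$, all follow from $f(L(n+1)) \geq 9 f(L(n)) \geq f(L(n)) + 1$. This is the step I expect to be the main obstacle: Lemma~\ref{pq-other-def} forces $p,q$ (and hence $d$) to grow at least geometrically with base roughly $(3+\sqrt5)/2$, so the prescribed difference $f(L(m))$ can only be realized because the factor $9$ guarantees $f \circ L$ grows strictly faster than this minimal rate. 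The hypothesis on $f$ is thus used precisely here.

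Finally I would check conditions (i) and (ii) of Theorem~\ref{equiv+constants} with $K := 2$ and $C := N$. For (i), $Km = 2m \geq m+1$ gives $K(q(Km)-p(Km))/2 = d(2m) = f(L(2m)) \geq f(L(m+1))$. For (ii), the exact recursion $C L(m) = N L(m) = L(m+1)$ gives $C f(CL(m)) = N f(L(m+1)) \geq 2 f(L(m+1)) + 2 = 2(q(m+1)-p(m+1)+1)$, using $N \geq 65$ and $f(L(m+1)) \geq 1$. Invoking Theorem~\ref{equiv+constants} then produces an elementary amenable lawless group $\Gamma$ with finite generating set $S$ and $\mathcal{A}_{\Gamma}^{S}(n) \approx f(n)$, as required.
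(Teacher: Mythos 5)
Your proposal is correct and follows essentially the same route as the paper: both reduce to Theorem~\ref{equiv+constants} by taking $L$ geometric with ratio at least $\max\{M,65\}$ and defining $p,q$ from $f\circ L$ so that $q-p$ tracks $f\circ L$, with the hypothesis $f(Mn)\geq 9f(n)$ used exactly to verify the recursive inequalities of Lemma~\ref{pq-other-def}. The only differences are cosmetic choices of formulas and constants (the paper takes $L(m)=T^m$, $p(m)=f(T^{m+1})$, $q(m)=3f(T^{m+1})$, $K=1$, $C=4T^2$, whereas you prescribe the difference $q-p=f\circ L$ directly and take $K=2$, $C=N$), and all of your verifications check out.
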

\begin{proof}
Consider an integer $T \geq M$. Then $f(Tn) \geq f(Mn) \geq 9f(n)$ as $f$ is non-decreasing. Pick $T$ large enough so that Lemma \ref{law-without-square} is satisfied by $L(m) \coloneq T^m$.

Define $p(m) \coloneq f(T^{m+1})$ and $q(m) \coloneq 3f(T^{m+1})$ for $m \geq 2$ and $p(1)=q(1)=0$. We can see $q(m+1) = 3p(m+1)$ and as $f(Tn) \geq 9f(n)$ we also have $p(m+1) \geq 3q(m)$. Hence these functions satisfy Lemma \ref{pq-other-def}.

We will show these functions satisfy Lemma \ref{equiv+constants} by taking $K \coloneq 1$ and $C \coloneq 4T^2$. We see that $K(q(Km)-p(Km))/2 = f(T^{m+1}) = f(L(m+1))$ so (i) is satisfied. We also see that $Cf(CL(m)) = 4T^2f(4T^{m+2}) \geq 4f(T^{m+2})+2 = 2 ( q(m+1)-p(m+1) + 1 )$ so (ii) is satisfied. Therefore $\mathcal{A}_{\Gamma} ^S (n) \approx f (n)$.
\end{proof}

We can now complete the proof of Theorem \ref{intro-fast-f} from the Introduction.

\begin{thm} \label{fast-cor}
Let $f : \mathbb{N} \rightarrow \mathbb{N}$ be an unbounded non-decreasing function. Suppose there exists some integer $M \geq 2$ where $f(Mn) \geq Mf(n)$ for all $n \in \mathbb{N}$. Then there exists an elementary amenable lawless group $\Gamma$, generated by a finite set $S$ such that for all $n\in\mathbb{N}$, $\mathcal{A}_{\Gamma} ^S (n) \approx f (n)$.
\end{thm}
\begin{proof}
There exists some $k \in \mathbb{N}$ where $M^k \geq 9$. Then 
\begin{equation*}
9f(n) \leq M^kf(n) \leq M^{k-1}f(Mn)\leq M^{k-2}f(M^2n) \leq ... \leq f(M^kn)
\end{equation*}
which satisfies the conditions of Theorem \ref{fast-f}.
\end{proof}

\begin{ex} \label{fast-ex}
For every $\lambda \in [1,\infty)$, the function $f(n) = \lceil n^{\lambda} \rceil$ 
satisfies the conditions of Theorem \ref{fast-cor} using any integer $M \geq 2$. 
\end{ex}

\begin{thm} \label{f-slow}
Let $f : \mathbb{N} \rightarrow \mathbb{N}$ be an unbounded non-decreasing function. Suppose there exists a function $g : \mathbb{N} \rightarrow \mathbb{N}$ where $fg(n)=n$ for all $n \in \mathbb{N}$, and a constant $M \in \mathbb{N}$ where $g(Mn) \geq 93g(n)$ for all $n \in \mathbb{N}$. Then there exists an elementary amenable lawless group $\Gamma$, generated by a finite set $S$ such that for all $n\in\mathbb{N}$, 
$\mathcal{A}_{\Gamma} ^S (n) \approx f (n)$. 
\end{thm}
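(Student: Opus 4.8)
The plan is to mirror the structure of the proof of Theorem \ref{fast-f}, but now the hypotheses are phrased in terms of the inverse function $g$, which controls the \emph{slow} growth of $f$. First I would fix a constant $T \geq M$ large enough that the choice $L(m) \coloneq T^m$ satisfies the hypothesis $L(m+1) \geq 64 L(m) + 29$ of Theorem \ref{law-without-square}, so that the sharp bound $\lvert v_n \rvert \leq \alpha L(n)$ of Remark \ref{square-diff} is available and $\Gamma(L)$ is a well-defined lawless elementary amenable sparse wreath product. The whole game is then to define $p$ and $q$ so that (a) they satisfy Lemma \ref{pq-other-def} (hence the conclusion of Lemma \ref{SparseFnctLem} and the monotonicity needed by Theorem \ref{cmplx-lwbnd}), and (b) the difference $q(m)-p(m)$ is calibrated against $f$ in the way conditions (i) and (ii) of Theorem \ref{equiv+constants} demand.

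The natural choice, by analogy with Theorem \ref{fast-f} where $q(m)-p(m) \approx f(L(m+1))$, is to make $q(m)-p(m)$ comparable to $g(L(m+1)) = g(T^{m+1})$, since now it is the argument rather than the output of $f$ that we are trying to track. Concretely I would set something like $p(m) \coloneq c\, g(T^{m+1})$ and $q(m) \coloneq c'\, g(T^{m+1})$ for suitable integer constants $c < c'$ (with $p(1)=q(1)=0$), chosen so that the two inequalities of Lemma \ref{pq-other-def}, namely $p(m+1) \geq p(m)+q(m)+1$ and $q(m+1) \geq p(m+1)+q(m)+1$, follow from the hypothesis $g(Mn) \geq 93 g(n)$; the large constant $93$ in the statement is precisely what makes the ratios $g(T^{m+1})/g(T^m)$ large enough to absorb the additive $+q(m)$ and $+1$ terms and the fixed multiplicative constants $c, c'$. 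This is a routine but fiddly bookkeeping step, and getting the constants to line up is where the specific number $93$ earns its place.

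Next I would verify conditions (i) and (ii) of Theorem \ref{equiv+constants}. For (i), $K(q(Km)-p(Km))/2 \geq f(L(m+1))$: since $q(m)-p(m) \approx g(T^{m+1})$ and $f(g(n)) = n$, applying $f$ relates $q(m)-p(m)$ back to $L(m+1) = T^{m+1}$, so (i) should hold for an appropriate constant $K$, using monotonicity of $f$ to pass from $g(T^{m+1})$ to $f(L(m+1))$ after composing with $f$. For (ii), $Cf(CL(m)) \geq 2(q(m+1)-p(m+1)+1)$: here the right-hand side is comparable to $g(T^{m+2})$, and I would use $f\,g = \id$ together with the doubling hypothesis on $g$ (iterated, as in Corollary \ref{fast-cor}) to bound $g(T^{m+2})$ by a constant multiple of $f(CL(m))$; the factor $T^2$ gap between $L(m)$ and $L(m+2)$ is what forces the constant $C$ to involve a power of $T$, exactly as in the proof of Theorem \ref{fast-f}.

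The main obstacle I anticipate is condition (ii), equivalently the upper bound $\mathcal{A}_{\Gamma}^S \preceq f$. The relation $f\,g = \id$ is only a one-sided inverse, so $g$ is a right inverse of $f$ but $f$ need not be injective, and one must be careful that $f(C L(m))$ is genuinely large enough to dominate $q(m+1)-p(m+1) \approx g(T^{m+2})$. The key is that on the specific sequence of values $L(m) = T^m$ the composition $f(g(\cdot))$ collapses cleanly, and the doubling property $g(Mn)\geq 93 g(n)$ provides the multiplicative slack needed to bridge the index shift from $m$ to $m+2$; making this estimate precise, with explicit constants, is the delicate part, whereas the verification that $p, q$ satisfy Lemma \ref{pq-other-def} and that $\Gamma$ is lawless and elementary amenable is immediate from the cited results. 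Once (i) and (ii) are checked, Theorem \ref{equiv+constants} delivers $\mathcal{A}_{\Gamma}^S(n) \approx f(n)$ directly.
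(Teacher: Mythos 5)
Your parametrization of $p$, $q$ and $L$ is miscalibrated, and the step you flag as ``the delicate part'' --- condition (ii) of Theorem \ref{equiv+constants} --- is not merely delicate but false under your choices. The machinery of Lemma \ref{lg-1} forces $\mathcal{A}_{\Gamma(L)}^{S(L)}(L(m)) \approx q(m)-p(m)$, so to obtain $\mathcal{A}_{\Gamma}^{S} \approx f$ you must arrange $q(m)-p(m) \approx f(L(m))$. With $L(m)=T^m$ and $q(m)-p(m)=(c'-c)\,g(T^{m+1})$ you have instead placed the points $(L(m),\,q(m)-p(m))$ on (roughly) the graph of $g$, i.e.\ of the inverse of $f$, not of $f$ itself. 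Concretely, condition (ii) in your scheme asks for $Cf(CT^m) \geq 2(c'-c)g(T^{m+2})+2$; take $g(n)=2^{2^n}$ (which satisfies $g(Mn)\geq 93g(n)$ for a suitable constant $M$ and corresponds to $f(n)\approx \log_2\log_2 n$): the left-hand side is $O(\log m)$ while the right-hand side is doubly exponential in $T^{m+2}$, so no constant $C$ can work. The identity $fg=\mathrm{id}$ is a one-sided inverse that never lets you bound $g$ from above by $f$, so the obstacle you anticipate cannot be overcome within your setup.

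The repair --- and this is what the paper does --- is to put $g$ into $L$ rather than into $p$ and $q$: set $L(m) \coloneq g(T^m)$ with $T \geq \max(M,9)$, so that $f(L(m)) = T^m$ exactly, and keep $p(m) \coloneq T^m$, $q(m) \coloneq 3T^m$ as pure exponentials (with $p(1)=q(1)=0$). Then Lemma \ref{pq-other-def} holds simply because $T \geq 9$; the hypothesis $g(Mn)\geq 93g(n)$ is spent precisely on verifying $L(m+1) = g(T^{m+1}) \geq g(MT^m) \geq 93L(m) \geq 64L(m)+29$, so the constant $93=64+29$ is earned by Theorem \ref{law-without-square}, not by the $p,q$ bookkeeping as you suggest; and both conditions of Theorem \ref{equiv+constants} collapse to trivial inequalities between powers of $T$ (the paper takes $K=2$ and $C=5T$), since $f(L(m+1))=T^{m+1}$ and $q(m+1)-p(m+1)=2T^{m+1}$. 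Note finally that your scheme also cannot be salvaged by instead setting $q(m)-p(m) \coloneq f(T^m)$: the inequalities of Lemma \ref{pq-other-def} force $q(m)-p(m)$ to grow at least geometrically in $m$, which $f(T^m)$ need not do for slow $f$; stretching the $L$-axis by $g$ is exactly what makes $f\circ L$ geometric.
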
 
\begin{proof}
As $fg(n)=n$ for all $n \in \mathbb{N}$ we can see that $g$ is injective, hence unbounded. If $n_1 < n_2$ and $g(n_2) < g(n_1)$ then $n_2 = fg(n_2) < fg(n_1) = n_1$ which is a contradiction, hence $g$ is also non-decreasing.

Let $T \in \mathbb{N}$ be a constant such that $T \geq M$ and $T \geq 9$. Define $L : \mathbb{N} \rightarrow \mathbb{N}$ as $L(m) \coloneq g(T^m)$. As $g$ is non-decreasing, we see 
\begin{equation*}
L(m+1) = g(T^{m+1}) \geq g(MT^m) \geq 93g(T^m) = 93L(m) \geq 64L(m) + 29    
\end{equation*}
so this $L$ satisfies Lemma \ref{law-without-square}.

Define $p(m) \coloneq T^m$ and $q(m) \coloneq 3T^m$ for $m \geq 2$ and $p(1)=q(1)=0$. We can see $q(m+1) = 3p(m+1)$ and as $T \geq 9$ we have $p(m+1) \geq 3q(m)$ so these functions satisfy the conditions of Lemma \ref{pq-other-def}.

We will show these functions satisfy Lemma \ref{equiv+constants} by taking $K \coloneq 2$ and $C \coloneq 5T$. We see that $K(q(Km)-p(Km))/2 = 2T^{2m}\geq T^{m+1} = f(L(m+1))$ so (i) is satisfied. We also see that 
\begin{equation*}
Cf(CL(m)) \geq Cf(L(m)) = 5T^{m+1} \geq 4T^{m+1} + 2 = 2 ( q(m+1)-p(m+1) + 1 )
\end{equation*}
so (ii) is satisfied. Therefore $\mathcal{A}_{\Gamma} ^S (n) \approx f (n)$.
\end{proof}

We can now complete the proof of Theorem \ref{intro-f-slow} from the Introduction.

\begin{thm} \label{slow-ex}
Let $f : \mathbb{N} \rightarrow \mathbb{N}$ be a surjective unbounded non-decreasing function. Suppose there exists some integer $M \geq 2$ where $f(Mn) \leq Mf(n)$ for all $n \in \mathbb{N}$. Then there exists an elementary amenable lawless group $\Gamma$, generated by a finite set $S$ such that for all $n\in\mathbb{N}$, $\mathcal{A}_{\Gamma} ^S (n) \approx f (n)$.
\end{thm}
\begin{proof}
Define $f_* (n) \coloneq \min ( f^{-1}(n) )$ and $f^* (n) \coloneq \max ( f^{-1}(n) )$. Suppose $f(x)=y$. Then
\begin{equation*}
\begin{aligned}
Mf_*(y) = Mf_*f(x)  
& \leq Mx \\
&  \leq  f^*f(Mx) \\
& \leq f^*(Mf(x)) = f^*(My) \leq f_*(My + 1) \leq f_*((M+1)y)\textnormal{.}
\end{aligned}
\end{equation*}
There exists some $k \in \mathbb{N}$ where $M^k \geq 93$. Then
\begin{equation*}
93f_*(n) \leq M^kf_*(n) \leq f_*((M+1)^kn)
\end{equation*}
hence $f_*$ satisfies the conditions of $g$ in Theorem \ref{f-slow}.
\end{proof}

\begin{ex} \label{slow-ex-exp}
For every $\mu \in (0,1]$ the function $f(n) = \lceil n^{\mu} \rceil$ satisfies: 
\begin{equation*}
f(Mn) = \lceil (Mn)^{\mu} \rceil \leq \lceil n^{\mu} M \rceil \leq \lceil M \rceil \lceil n^{\mu} \rceil  = Mf(n)
\end{equation*}
for any integer $M \geq 2$. 
Hence this function satisfies the conditions of Theorem \ref{slow-ex}.
\end{ex}

\end{document}